\numberwithin{equation}{section}
\newcolumntype{L}[1]{>{\raggedright\let\newline\\\arraybackslash\hspace{0pt}}m{#1}}
\newcolumntype{C}[1]{>{\centering\let\newline\\\arraybackslash\hspace{0pt}}m{#1}}
\newcolumntype{R}[1]{>{\raggedleft\let\newline\\\arraybackslash\hspace{0pt}}m{#1}}
\newtheorem{theorem}{Theorem}[section]
\newtheorem{lemma}[theorem]{Lemma}
\newtheorem{fact}[theorem]{Fact}
\newtheorem{corollary}[theorem]{Corollary}
\newtheorem{conjecture}[theorem]{Conjecture}
\newtheorem{question}[theorem]{Question}
\newtheorem{definition}[theorem]{Definition}
\title{Connectivity keeping trees in triangle-free graphs}
\author{Hojin Chu\thanks{Korea Institute for Advanced Study, Seoul 02455, Republic of Korea. Email: hojinchu@kias.re.kr},
\and Shinya Fujita,\thanks{School of Data Science, Yokohama City University, Yokohama 236-0027, Japan.  Email: fujita@yokohama-cu.ac.jp}
\and Boram Park\thanks{Department of Mathematics Education, Seoul National University, Seoul 08826, Republic of Korea. Email:borampark@snu.ac.kr},
\and Homoon Ryu\thanks{Department of Mathematics, Ajou University, Suwon 16499, Republic of Korea. Email: ryuhomoon@ajou.ac.kr}
}
\date{\today}
\begin{document}

\maketitle

\begin{abstract}
In 2012, Mader conjectured that for any tree $T$ of order $m$, every $k$-connected graph $G$ with minimum degree at least $\lfloor \frac{3k}{2}\rfloor+m-1$ contains a subtree $T'\cong T$ such that $G-V(T')$ remains $k$-connected.
In 2022, Luo, Tian, and Wu considered an analogous problem for bipartite graphs and conjectured that for any tree $T$ with bipartition $(X,Y)$, every $k$-connected bipartite graph $G$ with minimum degree at least $k+\max\{|X|,|Y|\}$ contains a subtree $T'\cong T$ such that $G-V(T')$ remains $k$-connected.
In this paper, we relax the bipartite assumption by considering triangle-free graphs and prove that for any tree $T$ of order $m$, every $k$-connected triangle-free graph $G$ with minimum degree at least $2k+3m-4$ contains a subtree $T' \cong T$ such that $G-V(T')$ remains $k$-connected. Furthermore, we establish refined results for specific subclasses such as bipartite graphs or graphs with girth at least five. 
\end{abstract}

\bigskip

\noindent
{\it Keywords.} Connected triple; Triangle-free graph; $k$-connected graph; Connectivity keeping tree 


\section{Introduction}
Throughout this paper, all graphs are finite, simple, and undirected. The {\it order} of a graph is the cardinality of its vertex set. For a vertex set $W\subseteq V(G)$ of a graph $G$, $G[W]$ is the subgraph induced by $W$ and $G-W = G[V(G)\setminus W]$.
For a graph $G$, we denote the {\it minimum degree}, the {\it maximum degree}, and the {\it connectivity} of $G$ by $\delta(G)$, $\Delta(G)$, and $\kappa(G)$, respectively. 
We also denote by $g(G)$ the {\it girth} of $G$, that is, the length of its shortest cycle. 
For a positive integer $k$, if $G$ is a graph with $\kappa(G) \ge k$, then $G$ is called {\it $k$-connected}. Unless otherwise specified, $k$ and $m$ are always positive integers.

The problem of connectivity keeping subgraphs studies how large the minimum degree of a $k$-connected graph must be to ensure that the deletion of a specified subgraph does not destroy its $k$-connectivity. (See~\cite{TM26} for a survey.)
A fundamental result was established by Chartrand, Kaugars, and Lick~\cite{CKL72} in 1972.
\begin{theorem}[\!\!\cite{CKL72}]\label{thm:Chartrand}
Let $k$ be a positive integer.
Every $k$-connected graph $G$ with $\delta(G) \ge \lfloor \frac{3k}{2}\rfloor$ has a vertex $v$ such that $G-v$ remains still $k$-connected.
\end{theorem}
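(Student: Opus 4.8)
The plan is to argue by contradiction, combining a minimal-fragment choice with the minimum-degree hypothesis and the submodularity of the neighbourhood function. First I would dispose of the trivial cases: if $\kappa(G)\ge k+1$ then $\kappa(G-v)\ge\kappa(G)-1\ge k$ for every vertex $v$, and if $|V(G)|\le k+1$ then $G=K_{k+1}$, in which case $\delta(G)=k\ge\lfloor 3k/2\rfloor$ forces $k\le 1$ and the statement is immediate. So assume $\kappa(G)=k$ and $|V(G)|\ge k+2$, and suppose for contradiction that $G-v$ fails to be $k$-connected for every $v\in V(G)$. Then $G-v$ has at least $k+1$ vertices and is not $k$-connected while $G$ is, so it has a cut $S_v$ of size exactly $k-1$, and $T_v:=S_v\cup\{v\}$ is then a \emph{minimum} cut of $G$; in particular, every vertex of $G$ lies in some minimum cut.

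Next I would fix a minimum cut $T$ of $G$ and a component $C$ of $G-T$ for which $|C|$ is as small as possible, and set $H:=V(G)\setminus(C\cup T)$. By minimality of the cut, $H\ne\emptyset$, no edge joins $C$ and $H$, each vertex of $T$ has a neighbour in $C$, and $N(C)=N(H)=T$. Since $N_G(u)\subseteq(C\cup T)\setminus\{u\}$ for every $u\in C$, the hypothesis $\delta(G)\ge\lfloor 3k/2\rfloor$ gives $|C|-1+k\ge\lfloor 3k/2\rfloor$, that is, $|C|\ge\lfloor k/2\rfloor+1$. Observe that if instead $|C|\le\lfloor k/2\rfloor$ held, some $u\in C$ would satisfy $\deg_G(u)\le\lfloor 3k/2\rfloor-1$, an immediate contradiction; so the whole task is to rule out the possibility $|C|\ge\lfloor k/2\rfloor+1$.

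To do so, I would pick any $w\in C$ and, using the conclusion of the first paragraph, a minimum cut $T'$ with $w\in T'$. Choosing a side $F'$ of $T'$ that meets $C$, the crux is a crossing-cut computation: from the submodular inequality $|N(X\cap Y)|+|N(X\cup Y)|\le|N(X)|+|N(Y)|$ for the neighbourhood function, together with the fact that $k$-connectivity forces $|N(Z)|\ge k$ whenever $Z\ne\emptyset\ne V(G)\setminus(Z\cup N(Z))$, one deduces that a suitable set built from $C\cap F'$ is again a side of a minimum cut of $G$. Since $w\in C$ but $w\in T'=N(F')$, we have $w\notin F'$ and hence $C\cap F'\subsetneq C$; therefore some component of $G$ minus the corresponding minimum cut has fewer than $|C|$ vertices, contradicting the choice of $(T,C)$ and completing the proof.

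The step I expect to be the main obstacle is exactly this crossing-cut analysis with its degenerate cases: one must handle $C\subseteq T'$ (in which no side of $T'$ meets $C$, although this already forces $|C|\le k$) as well as the configurations in which the auxiliary sets needed to invoke submodularity --- for instance $H$ intersected with a side of $T'$ --- turn out to be empty, and verify that in every such case one can still exhibit a side of a minimum cut strictly smaller than $C$, or else directly a vertex of degree below $\lfloor 3k/2\rfloor$. Packaging this bookkeeping cleanly, for instance through the language of atoms (minimum-cardinality fragments) together with the fact that two distinct atoms are disjoint --- itself a consequence of the same submodular inequality --- is where the real content of the theorem lies.
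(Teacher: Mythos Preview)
The paper does not prove Theorem~\ref{thm:Chartrand} at all: it is quoted as a classical result of Chartrand, Kaugars, and Lick with a citation to~\cite{CKL72}, and is invoked only once (to dispose of the case $m=1$ in the proof of Theorems~\ref{thm:main_trianglefree} and~\ref{thm:main_bip_g5}). So there is no ``paper's own proof'' to compare against.

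That said, your outline is the standard fragment/atom argument from the original source. One remark on the write-up: the sentence ``so the whole task is to rule out the possibility $|C|\ge\lfloor k/2\rfloor+1$'' is backwards --- you have just \emph{established} that inequality from the degree hypothesis, so it cannot be ruled out; what you mean is that the remaining task is to derive a contradiction to the minimality of $|C|$, and the lower bound $|C|\ge\lfloor k/2\rfloor+1$ plays no further role once you have it. The substantive content, as you correctly identify, is the crossing-cut lemma showing that an atom meets no minimum cut; your sketch of this via submodularity of $|N(\cdot)|$ is the right mechanism, but the degenerate cases (e.g.\ $C\subseteq T'$, or one of $H\cap F'$, $H\setminus(F'\cup T')$ empty) do require genuine bookkeeping that your proposal defers rather than carries out. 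In particular, noting only that $C\subseteq T'$ forces $|C|\le k$ is not by itself a contradiction, since that is compatible with $|C|\ge\lfloor k/2\rfloor+1$; one still has to run the submodular inequality on the other pieces to extract a strictly smaller fragment.
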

In 2008, Fujita and Kawarabayashi~\cite{FK08} proved that every $k$-connected graph $G$ with $\delta(G) \ge \lfloor \frac{3k}{2}\rfloor+2$ contains an edge $e=uv$ such that $G-\{u,v\}$ remains still $k$-connected.
They conjectured that given positive integers $k$ and $m$, there is a function $f_{k}(m)$ such that every $k$-connected graph $G$ with $\delta(G) \ge \lfloor \frac{3k}{2}\rfloor+f_k(m)$ contains a connected subgraph $H$ of order $m$ satisfying $\kappa(G-V(H))\ge k$.
Indeed, they gave examples to show that $f_k(m)$ is at least $m-1$.
In 2010, Mader solved the conjecture by proving that such a subgraph can be chosen to be a path as follows.

\begin{theorem}[\!\!\cite{Mader10}]\label{thm:mader}
Let $k$ and $m$ be positive integers.
Every $k$-connected graph $G$ with $\delta(G) \ge \lfloor \frac{3k}{2}\rfloor+m-1$ contains a path $P$ of order $m$  such that $\kappa(G-V(P)) \ge k$.
\end{theorem}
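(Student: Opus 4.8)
The plan is to argue by contradiction through an extremal choice of path. Suppose $G$ is $k$-connected with $\delta(G)\ge\lfloor 3k/2\rfloor+m-1$ but no path $P$ of order $m$ in $G$ satisfies $\kappa(G-V(P))\ge k$. Among all paths $P$ of $G$ with $|V(P)|\le m$ and $\kappa(G-V(P))\ge k$ --- a nonempty family, since the empty path qualifies and Theorem~\ref{thm:Chartrand} even furnishes one of order $1$ --- choose $P=v_1v_2\cdots v_p$ with $p=|V(P)|$ as large as possible; by assumption $p\le m-1$. Put $H=G-V(P)$. Routine counting yields $\delta(H)\ge\delta(G)-p\ge\lfloor 3k/2\rfloor$, $|N_H(v_1)|\ge\delta(G)-(p-1)\ge\lfloor 3k/2\rfloor+1$, and $|V(H)|\ge\lfloor 3k/2\rfloor+1$.

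First I would use the maximality of $P$ to pin down the endpoint $v_1$. For any $w\in N_H(v_1)$ the sequence $wv_1v_2\cdots v_p$ is a path of order $p+1\le m$ whose removal from $G$ leaves $H-w$; hence $\kappa(H-w)\le k-1$, and since $H$ is $k$-connected with more than $k+1$ vertices, $\kappa(H-w)=k-1$. Thus every vertex of $N_H(v_1)$ lies in a $k$-element separator of $H$ --- it is \emph{non-removable}. The same holds for $N_H(v_p)$, and indeed for any vertex adjacent to both ends of an edge of $P$. So we have learned that the large set $N_H(v_1)$, and likewise $N_H(v_p)$, consists entirely of non-removable vertices of $H$; the remaining task is to contradict this, exploiting both the structure of $H$ and the fact that $v_1,v_p$ are the ends of $P$.

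The core of the argument, and the main obstacle, is the fragment analysis. Recall that a \emph{fragment} of $H$ is a set $F$ with $\emptyset\ne F\subsetneq V(H)$, $N_H(F)$ a minimum separator, and $V(H)\setminus(F\cup N_H(F))\ne\emptyset$; since every neighbour of a vertex of $F$ lies in $F\cup N_H(F)$, each fragment has at least $\delta(H)-k+1\ge\lceil k/2\rceil+1$ vertices. Knowing that all of $N_H(v_1)$ sits inside minimum separators, I would fix a fragment $F$ extremal for a suitably chosen parameter (smallest, or smallest meeting $N_H(v_1)$ in a prescribed way) and run the submodular crossing argument: for fragments $F,F'$ with distinct separators, $|N_H(F\cap F')|+|N_H(F\cup F')|\le|N_H(F)|+|N_H(F')|$, which together with $k$-connectivity produces a strictly smaller fragment unless the configuration is rigid. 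Pushing this through should force one of two conclusions --- either some $w\in N_H(v_1)$ is in fact removable in $H$, so $wv_1\cdots v_p$ is a good path of order $p+1$ contradicting maximality; or $H$ and the way $P$ attaches to it are so rigid (morally, $v_1$ is buried in a small fragment whose separator absorbs $N_H(v_1)$) that $P$ itself can be rerouted, replacing $v_1$, and if necessary a short initial segment, by a short path through that fragment to reach order $p+1$.

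The points I expect to struggle with are: choosing the right extremal parameter for $P$ so the crossing argument gains traction; controlling how $N_H(v_1)$ and $N_H(v_p)$ distribute between a fragment and its separator --- precisely here the threshold $\lfloor 3k/2\rfloor$, rather than a larger function of $k$, gets forced, mirroring Theorem~\ref{thm:Chartrand}; and converting a structural dead end into an honest rerouting of the path. The degenerate cases ($H$ complete, $|V(H)|$ close to $k$, or $k=1$) should be cleared away directly from the degree bound before the main argument begins.
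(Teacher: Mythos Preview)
The paper does not contain a proof of this theorem at all: Theorem~\ref{thm:mader} is quoted from Mader~\cite{Mader10} as background, and the paper's own results (Theorems~\ref{thm:main_trianglefree} and~\ref{thm:main_bip_g5}) are proved by an entirely different mechanism, namely the $p$-connected triple of Liu, Ying, and Hong~\cite{LYH24}. So there is nothing here to compare your attempt against.

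As for your proposal itself: the overall strategy --- maximise the length of a path whose removal preserves $k$-connectivity, observe that every $H$-neighbour of an endpoint is non-removable in $H$, and then run a fragment/submodularity analysis on $H$ --- is indeed the skeleton of Mader's original argument. But what you have written is an outline, not a proof: you explicitly flag the fragment analysis as the ``core of the argument, and the main obstacle,'' and you do not carry it out. The crucial step is precisely the one you defer --- showing that a set of at least $\lfloor 3k/2\rfloor+1$ non-removable vertices in a $k$-connected graph of minimum degree $\lfloor 3k/2\rfloor$ forces either a removable vertex among them or enough rigidity to reroute the path. Mader's actual proof handles this by working with a carefully chosen smallest fragment (an ``end'') and tracking how the endpoint neighbourhoods meet it and its separator; the bound $\lfloor 3k/2\rfloor$ enters because a fragment has at least $\delta(H)-k+1\ge\lceil k/2\rceil$ vertices, and one needs the neighbourhood to overflow the $k$-vertex separator into the fragment itself. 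Your sketch gestures at this but does not pin down the extremal choice or execute the crossing argument, and the ``rerouting'' alternative you mention is left entirely unspecified. Until those steps are written out, this is a plan rather than a proof.
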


Mader further conjectured that Theorem~\ref{thm:mader} holds not only for a path of order $m$, but also any tree of order $m$.

\begin{conjecture}[\!\!\cite{Mader10}]\label{conj:mader}
Let $k$ and $m$ be positive integers.
For any tree $T$ of order $m$, every $k$-connected graph $G$ with $\delta(G) \ge \lfloor \frac{3k}{2} \rfloor + m -1$ contains a subtree $T' \cong T$ such that $\kappa(G-V(T')) \ge k$.
\end{conjecture}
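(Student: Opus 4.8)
The plan is to proceed by induction on the order $m$ of the tree, embedding $T$ one vertex at a time in a rooted tree order. The base case $m=1$ is exactly Theorem~\ref{thm:Chartrand}. For the inductive step I would fix a leaf $\ell$ of $T$ with neighbor $p$, set $T^- = T - \ell$, and apply the induction hypothesis — noting that $\delta(G) \ge \lfloor 3k/2\rfloor + m - 1$ meets the threshold $\lfloor 3k/2\rfloor + (m-1) - 1$ needed for order $m-1$ with one to spare — to obtain a copy of $T^-$ in $G$, embedded so that $p \mapsto x$, with $H := G - V(T^-)$ still $k$-connected. It then suffices to find a vertex $w \in V(H)$ adjacent to $x$ in $G$ such that $H - w$ remains $k$-connected; attaching $w$ to $x$ as the image of $\ell$ completes a copy $T'$ of $T$ with $G - V(T') = H - w$, as required.

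This reduces the whole problem to a \emph{localized} version of Theorem~\ref{thm:Chartrand}. First I would check the degree bookkeeping: deleting the $m-1$ vertices of $T^-$ lowers the minimum degree by at most $m-1$, so $\delta(H) \ge \lfloor 3k/2\rfloor$, and since $x$ has at most $m-2$ neighbors inside $V(T^-)\setminus\{x\}$, the prescribed set $N := N_G(x)\cap V(H)$ has size at least $\lfloor 3k/2\rfloor + 1$. The task becomes the following: in a $k$-connected graph $H$ with $\delta(H) \ge \lfloor 3k/2\rfloor$ and a prescribed set $N$ of size at least $\lfloor 3k/2\rfloor + 1$, locate a \emph{removable} vertex inside $N$, that is, a vertex $w \in N$ with $\kappa(H - w) \ge k$.

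To attack this localized problem I would analyze the non-removable vertices through fragments and submodularity. A vertex $w$ fails to be removable exactly when it lies in a cut of size $k$ whose removal leaves two nonempty sides, i.e.\ $w$ belongs to the boundary $N_H(F)$ of some fragment $F$ with $|N_H(F)| = k$. Choosing $F$ of minimum cardinality and applying the submodular inequality $|N_H(A\cap B)| + |N_H(A\cup B)| \le |N_H(A)| + |N_H(B)|$ to two crossing minimum cuts, the standard CKL-type argument shows that a smallest fragment has removable vertices in its interior and forces $|F| \ge \lfloor k/2\rfloor + 1$, since any $v\in F$ satisfies $\lfloor 3k/2\rfloor \le \deg(v) \le |F| - 1 + k$. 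The plan is to push this structural analysis far enough to conclude that the non-removable vertices cannot exhaust a set of size $\lfloor 3k/2\rfloor + 1$ that sits inside the neighborhood of a single vertex $x$.

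The hard part will be precisely this last step, and it is where the full conjecture is genuinely difficult: the slack is razor-thin — we have only one more neighbor than the minimum-degree bound guarantees — so a single unlucky configuration, with all removable vertices bunched into a fragment far from $x$ while $N$ lies entirely inside minimum cuts, cannot be excluded by counting alone. Overcoming this obstacle appears to require either a strengthened inductive hypothesis that records additional removability structure around the attachment vertex, an adaptive choice of which leaf $\ell$ to delete (hence which $x$ to attach at) guided by the fragment structure of $G$, or Mader's finer machinery on the lattice of minimum cuts and ``ends'' that underlies the proof of the path case in Theorem~\ref{thm:mader}. I expect the tree case to demand a genuinely new idea at exactly this junction, which is consistent with the conjecture remaining open.
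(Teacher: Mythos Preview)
The statement you are attempting is Conjecture~\ref{conj:mader}, which the paper explicitly records as \emph{open} for $k\ge 4$; there is no proof in the paper to compare against. Your proposal is, by your own admission in the final paragraph, not a proof but a reduction together with an honest diagnosis of the obstruction, and on that level it is sound: the induction on $m$ with base case Theorem~\ref{thm:Chartrand} is correctly set up, the degree bookkeeping is right, and the problem does reduce to finding, inside a $k$-connected graph $H$ with $\delta(H)\ge\lfloor 3k/2\rfloor$, a removable vertex within a prescribed set $N$ of size $\lfloor 3k/2\rfloor+1$. You are also correct that this localized removable-vertex lemma is exactly where the conjecture bites: the Chartrand--Kaugars--Lick argument and its submodular-fragment refinements guarantee removable vertices \emph{somewhere}, but do not force one into an arbitrary prescribed set of this size.

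Two remarks. First, the ``one to spare'' in the degree condition is not genuine slack: it is entirely consumed in guaranteeing $|N|\ge\lfloor 3k/2\rfloor+1$ rather than $\lfloor 3k/2\rfloor$, so the reduction is tight. Second, it is worth noting that the paper does not pursue this inductive leaf-attachment strategy even for the weaker results it \emph{does} prove (Theorems~\ref{thm:main_trianglefree} and~\ref{thm:main_bip_g5}). Instead it uses the $p$-connected triple machinery of Liu, Ying, and Hong: one locates a minimal connected triple $(S_1,S_2,F)$, embeds the \emph{entire} tree at once inside $V(F)$ away from a matching into $S_1$, and then argues via Lemma~\ref{lem:k-connected} that deleting the tree preserves $k$-connectivity. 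That approach sidesteps the localized removable-vertex problem altogether, at the cost of a larger degree threshold; whether it or your inductive scheme can be sharpened to reach $\lfloor 3k/2\rfloor+m-1$ is precisely the open question.
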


In a follow-up study, Mader~\cite{Mader12} proved that, for any tree $T$ of order $m$ and any $k$-connected graph $G$ with $\delta(G) \ge 2(m+k-1)^2+m-1$, there exists a subtree $T'\cong T$ of $G$ such that $G-V(T')$ remains $k$-connected.
Diwan and Tholiya~\cite{DT09} proved that the conjecture is true for $k=1$, and Hong and Liu~\cite{HL22} proved that it is true for $k\le 3$. The conjecture remains open for $k\ge 4$. Recently, Liu, Ying, and Hong~\cite{LYH24} proved that Conjecture~\ref{conj:mader} holds for minimum degree up to linear in $k$ and $m$ as follows.

\begin{theorem}[\!\!\cite{LYH24}]\label{thm:LYH24}
Let $k$ and $m$ be positive integers.
For any tree $T$ of order $m$, every $k$-connected graph $G$ with $\delta(G) \ge  3k+4m-6$ contains a subtree $T' \cong T$ such that $G-V(T')$ remains $k$-connected.
\end{theorem}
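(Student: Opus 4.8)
The plan is to argue by induction on the order $m$ of $T$, building a copy of $T$ inside $G$ one leaf at a time. For $m=1$ the tree is a single vertex, and since $3k+4\cdot 1-6=3k-2\ge\lfloor\frac{3k}{2}\rfloor$ for all $k\ge 1$, Theorem~\ref{thm:Chartrand} already produces a vertex $v$ with $\kappa(G-v)\ge k$. (The case $m=2$ is the Fujita--Kawarabayashi edge theorem~\cite{FK08}, since $3k+2\ge\lfloor\frac{3k}{2}\rfloor+2$; it also falls out of the step below.)

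For the inductive step, fix a leaf $x$ of $T$, let $y$ be its neighbour, and put $S:=T-x$, a tree of order $m-1$. Since $\delta(G)\ge 3k+4m-6\ge 3k+4(m-1)-6$, the induction hypothesis supplies a subtree $S'\cong S$ of $G$ with $\kappa(G-V(S'))\ge k$; let $y'\in V(S')$ correspond to $y$ and set $G':=G-V(S')$. For any $z\in N_G(y')\setminus V(S')$ the graph $T':=S'+y'z$ is a copy of $T$ with $G-V(T')=G'-z$, so it suffices to find such a $z$ with $\kappa(G'-z)\ge k$. Here $y'$ has at least $\delta(G)-(m-2)\ge 3k+3m-4$ neighbours in $V(G')$, and $\delta(G')\ge\delta(G)-(m-1)\ge 3k+3m-5$.

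To pin down a good $z$, I would pick the copy $S'$ extremally: among all copies of $S$ with $k$-connected complement (and with a designated vertex playing the role of $y$), choose one minimising the quantity $\Phi(S')$ equal to the least size of a component of $G'-R$, taken over all $z\in N_G(y')\setminus V(S')$ with $\kappa(G'-z)<k$ and all minimum vertex cuts $R$ of $G'$ with $z\in R$, with a lexicographic tie-breaker and the convention $\Phi(S')=\infty$ if no bad $z$ exists. If $\Phi(S')=\infty$ for the chosen $S'$ we are done; otherwise let it be realised by a bad neighbour $z_0$, a cut $R\ni z_0$ with $|R|=k$, and a smallest component $D$ of $G'-R$. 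A degree count in $D$ --- every vertex of $D$ has all of its $\ge\delta(G')$ neighbours inside $D\cup R$ --- yields $|D|\ge 2k+3m-4$ and forces $D$ to be \emph{tight}, i.e.\ $N_{G'}(D)=R$. The decisive move is then to modify $S'$, re-embedding a portion of it so as to exploit the large tight set $D$ (and keeping $y'$ available as an attachment point), producing another valid copy $S''\cong S$ for which either the designated vertex has a good candidate neighbour or $\Phi(S'')<\Phi(S')$ --- in either case contradicting the choice of $S'$.

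I expect this last relocation step to be the main obstacle, as it is in all prior work on Conjecture~\ref{conj:mader}: one has to track how $S'$ meets $R$, $D$, and the remaining components of $G'-R$, and then choose exactly which part of $S'$ to re-embed so as to simultaneously preserve the isomorphism type of $S$ with its designated vertex, keep the complement $k$-connected, and strictly decrease $\Phi$. It is the tension between these three demands that inflates the coefficients relative to Mader's path bound $\lfloor\frac{3k}{2}\rfloor+m-1$: attaching a leaf at a \emph{prescribed} vertex is far more rigid than extending a path at a free endpoint (hence a connectivity budget of order $3k$ rather than $\frac{3}{2}k$), and at each stage one must absorb both the induction hypothesis (cost of order $m$) and the fragment bound used in the swap (cost of order $3m$). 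The degenerate cases --- $G'$ with at most $k+1$ vertices, or $D$ too small to host the re-embedding --- would need separate but routine handling, and most of them are ruled out directly by the hypothesis $\delta(G)\ge 3k+4m-6$.
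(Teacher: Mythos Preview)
The paper does not itself prove Theorem~\ref{thm:LYH24}; it is quoted from \cite{LYH24}. The method there---and the one the present paper adapts for triangle-free graphs---is the \emph{$p$-connected triple} machinery (Definition~\ref{def:contri}, Lemma~\ref{lem:contriexists}), not induction on $m$ with a swap argument. In outline: set $p$ of order $k+m$, take a $p$-connected triple $(S_1,S_2,F)$ with $|V(F)|$ minimal, carve out inside $V(F)$ a region of minimum degree at least $m-1$, embed the whole tree $T$ there in one shot via Lemma~\ref{lem:tree_embedding}, and verify directly (using the triple axioms and the minimality of $F$) that deleting this copy leaves $G$ $k$-connected. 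No induction, no extremal choice of an embedding, no re-embedding.

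Your proposal is a genuinely different route, but it contains a real gap rather than merely a different idea: the ``decisive move'' in your third paragraph is never carried out, and you yourself flag it as ``the main obstacle.'' This is not a loose end one can tie up routinely. The relocation step---moving part of $S'$ into the tight component $D$ so as to simultaneously (i) preserve the isomorphism type of $S$ together with the designated vertex $y'$, (ii) keep the complement $k$-connected, and (iii) strictly decrease $\Phi$---is precisely the step that has blocked every direct attack on Conjecture~\ref{conj:mader} for arbitrary trees. For paths the free endpoint makes the swap go through; for a general tree with a \emph{prescribed} attachment vertex there is no known mechanism, even with the slack $3k+4m-6$ you allow. Your bound $|D|\ge 2k+3m-4$ is correct, but it gives no control over where $y'$ (or its image after re-embedding) sits relative to the new cuts of $G-V(S'')$, nor any reason why a witness to a strictly smaller $\Phi(S'')$ should exist. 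Absent a concrete swap, the argument does not close; the connected-triple approach of \cite{LYH24} sidesteps exactly this difficulty by embedding $T$ all at once inside a region whose removal has already been certified safe.
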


The problem of connectivity keeping subgraphs has also been
studied extensively in the context of bipartite graphs.
Luo, Tian, and Wu~\cite{LTW22} showed that the minimum degree condition of Theorem~\ref{thm:mader} can be relaxed under the bipartite condition.
\begin{theorem}[\!\!\cite{LTW22}]\label{thm:LTW22}
Let $k$ and $m$ be positive integers.
    Every $k$-connected bipartite graph $G$ with $\delta(G) \ge k+m$ contains a path $P$ of order $m$ such that $\kappa(G-V(P)) \ge k$.
\end{theorem}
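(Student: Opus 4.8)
I would follow the scheme of Mader's proof of Theorem~\ref{thm:mader}, but replace the steps that cope with clique-like local structure (and thereby force the term $\lfloor 3k/2\rfloor$) by arguments that use triangle-freeness, while also exploiting that in a bipartite graph a path alternates between the two colour classes. Fix a $k$-connected bipartite graph $G$ with $\delta(G)\ge k+m$ and call a path $P$ in $G$ \emph{good} if $|V(P)|\le m$ and $\kappa(G-V(P))\ge k$. The first task is the base case $m=1$: every $k$-connected bipartite $G$ with $\delta(G)\ge k+1$ has a vertex $v$ with $\kappa(G-v)\ge k$; in particular good paths then exist for every $m$. I would prove it by contradiction. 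Discarding the trivial case $k\le 1$, a $k$-connected triangle-free graph has at least $k+2$ vertices, so if no such $v$ exists then every vertex of $G$ lies in a minimum cut, necessarily of order $k$. Choose a minimum cut $T$ and a fragment $C$ of $G-T$ of minimum order, a vertex $x\in C$, and a minimum cut $T_x\ni x$. Comparing $T$ with $T_x$ through the submodular inequality $|N(C\cap D)|+|N(\overline C\cap\overline D)|\le|T|+|T_x|=2k$, applied to the two sides $D,\overline D$ of $G-T_x$, forces one of the four ``corner'' sets to be a nonempty fragment of order strictly less than $|C|$ — contradicting the minimality of $C$ — unless some corner set lies entirely inside $T$ or inside $T_x$ and thus has order at most $k$. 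But a vertex set $W$ with $|W|\le k$ that induces a connected subgraph containing an edge $yy'$ cannot be a fragment of $G$: triangle-freeness gives $N(y)\cap N(y')=\emptyset$, while $N(y)\cup N(y')\subseteq W\cup N(W)$, forcing $2(k+1)\le|W|+|N(W)|\le 2k$. Hence the base case holds.

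Now let $P=v_1\cdots v_p$ be a good path with $p=|V(P)|$ as large as possible; by the base case $p\ge 1$, and I claim $p=m$. Suppose instead $p\le m-1$, and set $G':=G-V(P)$, so $\kappa(G')\ge k$, $G'$ is bipartite and triangle-free, $\delta(G')\ge(k+m)-p\ge k+1$, and hence $|V(G')|\ge k+2$. An endpoint $v:=v_p$ of $P$ has at most $p-1\le m-2$ neighbours on $P$ (fewer, by bipartiteness), so $|N_{G'}(v)|\ge(k+m)-(m-2)=k+2$. For each $w\in N_{G'}(v)$ the path $Pw$ has order $p+1\le m$, so by the maximality of $p$ it is not good, i.e.\ $\kappa(G'-w)<k$; since $|V(G')|\ge k+2$ this produces a minimum cut $T_w$ of $G'$ with $w\in T_w$ and $|T_w|=k$, which is automatically tight (every component of $G'-T_w$ has neighbourhood exactly $T_w$, else $T_w$ would not be minimum). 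Rotations $v_1\cdots v_iv_pv_{p-1}\cdots v_{i+1}$ (available whenever $v_pv_i\in E(G)$) let us assume these conclusions at several endpoints, yielding a family of minimum $k$-cuts of $G'$ anchored at $N_{G'}(v)$.

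The crux is a descent argument. Over all good paths of order $p$, all their endpoints $v$, all $w\in N_{G'}(v)$, all minimum cuts $T_w\ni w$ of $G'$, and all components $C$ of $G'-T_w$ that meet $N_{G'}(v)$, choose a configuration with $|C|$ minimum; such a $C$ exists because $|N_{G'}(v)|\ge k+2>|T_w|=k$ forces $v$ to have a neighbour outside $T_w$. Let $a\in C\cap N_{G'}(v)$. If $G'-a$ were $k$-connected, then $Pa$ would be a good path of order $p+1$, contradicting the maximality of $p$; therefore $\kappa(G'-a)<k$, giving a minimum cut $T_a\ni a$ of $G'$ with $|T_a|=k$. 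Comparing $T_a$ with $T_w$ exactly as in the base case — the submodular inequality together with triangle-freeness forbidding a corner to sit inside a $k$-cut — produces a nonempty fragment of $G'$ of order strictly less than $|C|$ that still meets $N_{G'}(v)$, contradicting the minimality. Hence $p=m$, which is the desired conclusion.

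The real obstacle is this last descent: choosing the right extremal quantity and the right pair of minimum cuts to cross so that the smaller ``corner'' produced is again of the controlled type and remains anchored at $N_{G'}(v)$, and verifying in every sub-case that triangle-freeness forces the crossing separator to have order below $k$ — equivalently, that the $\ge k+2$ neighbours of a path endpoint in $G'$ cannot all be packed around a single minimum $k$-cut together with one small fragment. This is precisely the place where, in the general-graph Theorem~\ref{thm:mader}, cliques obstruct the argument and the minimum-degree bound must be raised to $\lfloor 3k/2\rfloor+m-1$; the bipartite hypothesis both removes that obstruction and, through the alternating structure of paths, keeps the number of path-neighbours of an endpoint small enough for the additive constant to descend to $k+m$.
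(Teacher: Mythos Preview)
The present paper does not prove this statement: Theorem~\ref{thm:LTW22} is quoted from \cite{LTW22} as a known result, serving only as background and motivation for Conjecture~\ref{conj:bip}. There is therefore no proof in this paper against which to compare your proposal.

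On the proposal itself: your overall strategy---grow a maximal good path $P$, then cross minimum $k$-cuts of $G'=G-V(P)$ anchored at neighbours of the path's endpoint, exploiting triangle-freeness through the inequality $|N(y)|+|N(y')|\le |W|+|N(W)|$ for an edge $yy'$ inside a small fragment $W$---is the natural adaptation of Mader's template to the bipartite setting, and the base case is essentially sound once one passes to a connected component of the relevant corner. But what you have written is a strategy rather than a proof, and you acknowledge as much: the descent step is labelled ``the real obstacle,'' and you do not actually carry out the case analysis showing that crossing $T_a$ with $T_w$ produces a nonempty fragment of $G'$ of strictly smaller order that still meets $N_{G'}(v)$. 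That verification---in particular, ruling out the possibility that the smaller corner lies entirely inside one of the two cuts and misses $N_{G'}(v)$---is where all the work lies; until it is done, the argument is incomplete.
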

They also proposed the following bipartite version of Conjecture~\ref{conj:mader}.

\begin{conjecture}[\!\!\cite{LTW22}]\label{conj:bip}
Let $k$ be a positive integer.
For any tree $T$ with bipartition $(X,Y)$, every $k$-connected bipartite graph $G$ with $\delta(G) \ge k+\max\{|X|,|Y|\}$ contains a subtree $T' \cong T$ such that $\kappa(G-V(T')) \ge k$.
\end{conjecture}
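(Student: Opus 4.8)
The plan is to prove Conjecture~\ref{conj:bip} by induction on the order $m=|X|+|Y|$ of $T$, interleaving a greedy vertex-by-vertex embedding of $T$ with an analysis of \emph{fragments} to preserve $k$-connectivity. Write the bipartition of $G$ as $(P,Q)$. Since $T$ is connected and $G$ is bipartite, every embedding of $T$ into $G$ sends one color class of $T$ into $P$ and the other into $Q$; fixing the convention that $X$ maps into $Q$ and $Y$ into $P$, the neighbors available for attaching an $X$-vertex all lie in $Q$ and those for a $Y$-vertex all lie in $P$. This rigidity lets us account for the ``used up'' vertices on the two sides separately, which is the source of the $\max\{|X|,|Y|\}$ rather than $|X|+|Y|$ in the degree bound. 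For the base case $m=1$ (so $\max\{|X|,|Y|\}=1$ and $\delta(G)\ge k+1$) we need a vertex $v$ with $\kappa(G-v)\ge k$; I would supply this from the minimal-fragment lemma below, which is a bipartite strengthening of Theorem~\ref{thm:Chartrand}. To make the induction run, I would also strengthen the statement to carry a distinguished root $r$ of $T$, a prescribed image for $r$, and a lower bound on the number of still-available neighbors at each frontier vertex, as is standard in this line of work.

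Recall that for a $k$-connected $G$ with $\kappa(G)=k$, a \emph{fragment} is a set $F$ with $N(F)$ a minimum separator and $\emptyset\ne F\ne V(G)\setminus N(F)$, and an \emph{end-fragment} is an inclusion-minimal one. The first step records two facts. First, every vertex of a fragment $F$ has all its neighbors in $F\cup N(F)$, so $\delta(G)\le |F|-1+k$ and hence $|F|\ge\delta(G)-k+1\ge\max\{|X|,|Y|\}+1$; thus an end-fragment has room to host a whole color class on the relevant side. Second, I would establish a \emph{safe-deletion criterion}: if $F$ is an end-fragment with $S=N(F)$ and $W\subseteq F$ is such that $G[(F\setminus W)\cup S]$ is connected, $F\setminus W\ne\emptyset$, and every vertex of $S$ still has a neighbor in $F\setminus W$, then $\kappa(G-W)\ge k$. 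The regime $\kappa(G)>k$ is easier, since then any single deletion keeps $G$ $k$-connected and the embedding is far less constrained; the hard regime is $\kappa(G)=k$, where the tree must be grown essentially inside one end-fragment and its boundary.

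For the inductive step, let $w$ be a leaf of $T$ with neighbor $u$, and set $T_0=T-w$. After embedding $T_0$ with image $D$ (so $\kappa(G-D)\ge k$), attaching $w$ amounts to finding an unused neighbor $q$ of $\varphi(u)$ on the correct side with $\kappa\big(G-(D\cup\{q\})\big)\ge k$. The key count is that the forbidden targets split into two groups: the vertices on that side already used by $T_0$, of which there are at most $\max\{|X|,|Y|\}-1$, and the neighbors of $\varphi(u)$ whose deletion would drop the connectivity of $G-D$ below $k$. The crux is to show the latter number is at most $k-1$, so that the total number of forbidden targets is at most $\max\{|X|,|Y|\}+k-2<\delta(G)\le\deg(\varphi(u))$, leaving a valid choice of $q$. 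Here the bipartite (triangle-free) structure does the real work: the connectivity-critical neighbors of a vertex must cluster around a small separator, and the absence of triangles caps how many of them can lie in a single neighborhood, pinning the bound at $k-1$; this is the bipartite analogue of the $\lfloor 3k/2\rfloor$-versus-$k$ phenomenon already visible in Theorem~\ref{thm:Chartrand} and Theorem~\ref{thm:LTW22}.

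The main obstacle is exactly this last step: maintaining, throughout a \emph{branching} embedding, the invariant that at most $k-1$ neighbors of the current frontier vertex are connectivity-critical for $G-D$, while $D$ grows by up to $m-1$ vertices and the minimum degree of $G-D$ erodes (a $Y$-vertex loses at most $|X|$ neighbors and an $X$-vertex at most $|Y|$, so degrees stay $\ge k$ but only barely). For a path the frontier is a single vertex and the analysis is essentially linear, which is why Theorem~\ref{thm:LTW22} and the $k\le 3$ cases of Conjecture~\ref{conj:mader} are tractable; for a general tree the frontier branches, several partial subtrees compete for the same scarce safe vertices, and the end-fragment one grows inside can itself change as vertices are removed. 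Controlling this interaction---showing that a safe extension exists at every branch simultaneously---is where I expect the difficulty to concentrate, and it is precisely the point at which the general Mader conjecture remains open for $k\ge 4$; the bipartite hypothesis is what should supply the extra slack (the tight $k-1$ bound above) needed to close the argument.
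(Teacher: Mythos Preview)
First, a clarification: the paper does \emph{not} prove Conjecture~\ref{conj:bip}. It is quoted as an open problem of Luo, Tian, and Wu, and the paper explicitly says it ``remains open for $k\ge 4$.'' The paper's own bipartite contribution is the much weaker Theorem~\ref{thm:main_bip_g5}(ii), which needs $\delta(G)\ge 2k+2m+\max\{|X|,|Y|\}-3$ and is proved via the connected-triple machinery (Lemmas~\ref{lem:matching} and~\ref{lem:k-connected}), not by any leaf-by-leaf induction. So there is no ``paper's own proof'' of this statement to compare against.

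On its own terms your proposal is a plan, not a proof, and you concede as much. The crux is the assertion that at most $k-1$ neighbors of the current frontier vertex $\varphi(u)$ are connectivity-critical for $G-D$, and you explicitly say that maintaining this through a branching embedding is ``where I expect the difficulty to concentrate'' and ``precisely the point at which the general Mader conjecture remains open for $k\ge 4$.'' That concession is accurate and is the genuine gap. The bipartite hypothesis does buy the $k-1$ bound for a \emph{single} deletion, but after removing up to $m-1$ vertices the minimum degree of $G-D$ has eroded to roughly $k$, so your end-fragment estimate $|F|\ge\delta-k+1$ collapses to $|F|\ge 1$ and the safe-deletion criterion loses its force; the inductive hypothesis supplies only $\kappa(G-D)\ge k$, which is not enough structure to bound the number of critical neighbors at an arbitrary branch vertex. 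You have correctly located the obstruction but have not removed it---consistent with the conjecture being open.
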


Conjecture~\ref{conj:bip} was studied for special classes of trees when $k$ has small value, and most recently, Hong, Wu, and Liu~\cite{HWL25} improved these results by confirming Conjecture~\ref{conj:bip} for $k \le 3$. The conjecture remains open for $k \ge 4$.
Yang and Tian~\cite{YT25} proved that Conjecture~\ref{conj:bip} holds for odd paths by showing that every $k$-connected bipartite graph $G$ with $\delta(G) \ge k + \frac{m+1}{2}$ contains a path $P$ of order $m$ such that $G-V(P)$ remains still $k$-connected.
Recently, Fujita~\cite{Fujita25} relaxed the bipartite condition to the triangle-free condition 
by showing that every $k$-connected triangle-free graph $G$ with $\delta(G) \ge k + \lceil \frac{m+1}{2}\rceil$ contains a path $P$ of order $m$ such that $G-V(P)$ remains still $k$-connected. 

In this paper, we consider the existence of a connectivity keeping tree in a triangle-free graph as follows.

\begin{theorem}\label{thm:main_trianglefree}
Let $k$ and $m$ be positive integers, and $T$ be any tree
of order $m$. Then every $k$-connected triangle-free graph $G$ with $\delta(G) \ge 2k + 3m -4$ contains a subtree $T' \cong T$ such that $\kappa(G-V(T')) \ge k$.
\end{theorem}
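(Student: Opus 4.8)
The plan is to argue by induction on the order $m$ of $T$, with the inductive step decreasing $m$ by two. The base cases $m\in\{1,2\}$ do not even use triangle-freeness: when $m=1$ we have $\delta(G)\ge 2k-1\ge\lfloor\tfrac{3k}{2}\rfloor$, so Theorem~\ref{thm:Chartrand} supplies a vertex $v$ with $\kappa(G-v)\ge k$; when $m=2$ we have $\delta(G)\ge 2k+2\ge\lfloor\tfrac{3k}{2}\rfloor+2$, so the Fujita--Kawarabayashi edge theorem supplies an edge $u'v'$ with $\kappa(G-\{u',v'\})\ge k$, which is the desired copy of $T=K_2$.

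For the inductive step let $m\ge 3$. Looking at a longest path of $T$ one checks that at least one of the following holds: (i) $T$ has a \emph{cherry}, i.e.\ two leaves $x_1,x_2$ with a common neighbour $y$; or (ii) $T$ has a leaf $x$ whose neighbour $y$ satisfies $\deg_T(y)=2$, say $N_T(y)=\{x,y^+\}$. Set $T^-:=T-\{x_1,x_2\}$ in case (i) and $T^-:=T-\{x,y\}$ in case (ii); in both cases $T^-$ is a tree of order $m-2$, and since $2k+3(m-2)-4<2k+3m-4$ the induction hypothesis gives a copy $T'_-\cong T^-$ in $G$ with $\kappa(G-V(T'_-))\ge k$. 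Let $z\in V(T'_-)$ be the image of the reattachment vertex ($y$ in case (i), $y^+$ in case (ii)) and put $G^\ast:=G-(V(T'_-)\setminus\{z\})$; this graph is $k$-connected, being obtained from the $k$-connected graph $G-V(T'_-)$ by adding back the single vertex $z$, and it satisfies $\delta(G^\ast)\ge\delta(G)-(m-3)\ge 2k+2m-1\ge 2k+5$. To finish it suffices to produce $a,b\in V(G^\ast)\setminus\{z\}$ so that $\{z,a,b\}$ induces a path in $G^\ast$ — with $z$ the central vertex in case (i) and an endpoint in case (ii) — and $\kappa(G^\ast-\{z,a,b\})\ge k$; for then $T'$, obtained from $T'_-$ by appending $a,b$ and the edges of this path, is a copy of $T$ with $G-V(T')=G^\ast-\{z,a,b\}$, so $\kappa(G-V(T'))\ge k$.

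Everything therefore comes down to, and the main obstacle is, a \emph{connected triple lemma}: in a $k$-connected triangle-free graph, a vertex $z$ of large enough degree lies in a connected triple $\{z,a,b\}$ (in whichever of the two prescribed positions is wanted) whose deletion leaves the graph $k$-connected; by the estimate above it suffices to prove this under a fixed hypothesis $\delta\ge 2k+c$ with $c\le 5$, so that the induction closes at $m=3$. I would prove this lemma in the spirit of the fragment arguments behind Theorem~\ref{thm:Chartrand} and Mader's work: supposing no such triple exists, the candidate vertices (a subset of $N_{G^\ast}(z)$, hence an \emph{independent} set — this is where triangle-freeness is used) must each lie in some $k$-cut of $G^\ast$; passing to a minimum fragment $F$ that meets $N_{G^\ast}(z)$ and using that each vertex of $F$ has all but at most $k$ of its many neighbours inside $F$ forces $|F|$ to be large; and then a contradiction is reached either by a ``sliding'' argument exhibiting a strictly smaller such fragment, or by producing a smaller cut via the usual submodularity (crossing-cut) inequalities applied to $F$ and a second $k$-cut. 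The delicate points will be (a) pushing the lemma's threshold down to $2k+O(1)$ with the constant at most $5$, and (b) arranging the fragment analysis so that triangle-freeness buys exactly the ``$+3$ per new tree vertex'' of the bound $2k+3m-4$, as opposed to the ``$+4$ per vertex'' of Theorem~\ref{thm:LYH24} for general graphs.
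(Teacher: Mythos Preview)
Your inductive reduction is set up correctly and the arithmetic closes: if in every $k$-connected triangle-free $H$ with $\delta(H)\ge 2k+5$ and any prescribed $z\in V(H)$ one can find a $3$-vertex path through $z$ (with $z$ in the prescribed role) whose removal keeps $H$ $k$-connected, then the induction delivers exactly $2k+3m-4$. But that lemma is where all the content lies, and you do not prove it. What you offer is the generic template of a fragment argument --- each candidate lies in a $k$-cut, take a minimum fragment meeting $N_H(z)$, slide or cross cuts --- without saying how the pieces fit together in the \emph{rooted} setting, or where precisely triangle-freeness is spent beyond the remark that $N_H(z)$ is independent (you make no actual use of that observation). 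Your own items (a) and (b) flag the hard points and leave them open. As written, the proposal reduces Theorem~\ref{thm:main_trianglefree} to a statement of comparable depth and stops.

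By contrast, the paper neither roots anything nor builds $T$ incrementally. Its ``$p$-connected triple'' is unrelated to your three-vertex path: it is a structure $(S_1,S_2,F)$ with $F$ a component of $G-(S_1\cup S_2)$, $|S_1\cup S_2|\le 2p-1$, each $v\in S_1$ having at most $p$ neighbours in $F$, and $\kappa_{G[S_1\cup S_2\cup V(F)]}(S_2\cup V(F))\ge p+1$. Taking $p=k+m-1$ and $|V(F)|$ minimal, triangle-freeness forces a matching $M$ from $S_1$ into $V(F)$; one then shows that \emph{any} $m$-subset of $V(F)\setminus V(M)$ can be deleted while keeping $G$ $k$-connected, and a degree count gives $\delta(G[V(F)\setminus V(M)])\ge m-1$, so a copy of $T$ sits there. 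The whole tree is found in one shot inside a region where deletion is safe, which sidesteps the rooted-extension problem your approach creates.
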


We also obtain a refined result for subfamilies of triangle-free graphs, such as bipartite graphs or graphs with girth at least five, by an argument analogous to that used in the proof of Theorem~\ref{thm:main_trianglefree}.

\begin{theorem}\label{thm:main_bip_g5}
Let $k$ and $m$ be positive integers, and $T$ be any tree of order $m$ with bipartition $(X,Y)$. Then every $k$-connected graph $G$ contains a subtree $T' \cong T$ such that $\kappa(G-V(T')) \ge k$ if $G$ satisfies one of the following:
\begin{itemize}
\item[\rm(i)] $g(G)\ge 5$ and $\delta(G) \ge 2k + 2m+ \max\{ \frac{m-1}{2}, \Delta(T)\} -3$;
\item[\rm(ii)] $G$ is bipartite and $\delta(G) \ge 2k + 2m+ \max\{|X|,|Y|\} -3$.
\end{itemize}
\end{theorem}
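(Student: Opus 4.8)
The plan is to adapt the proof of Theorem~\ref{thm:main_trianglefree} rather than to deduce Theorem~\ref{thm:main_bip_g5} from it as a black box, since the two statements feed different bounds into the same machine. The first step is to isolate, inside the argument for Theorem~\ref{thm:main_trianglefree}, the single quantity responsible for the ``$3m$'' in the degree bound $2k+3m-4=2k+2m+(m-1)-3$: namely an estimate of the form
\[
|N_G(v)\cap V(T^{\ast})|\le m-1
\]
valid for every vertex $v$ of $G$ and every copy $T^{\ast}$ in $G$ of $T$ (or of $T$ with one leaf removed, which is the copy that appears at the step where the embedded subtree is extended by a new leaf at its attachment vertex). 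Everything else in that argument --- the use of Theorem~\ref{thm:Chartrand} for the base case $m=1$, the fact that a minimal fragment of a $k$-connected graph $H$ has at least $\delta(H)-k+1$ vertices, and the bookkeeping that keeps $G-V(T')$ $k$-connected as the subtree is grown --- is what produces the terms $2k$, $2m$ and the additive constant, and it does not interact with the girth or with bipartiteness. So the task reduces to replacing this ``$m-1$'' by the sharper bounds $\max\{|X|,|Y|\}$ in case~(ii) and $\max\{\tfrac{m-1}{2},\Delta(T)\}$ in case~(i), and then checking that the remainder of the proof goes through unchanged.

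For part~(ii), suppose $G$ is bipartite and $T^{\ast}$ is a copy of $T$ in $G$ with bipartition $(X^{\ast},Y^{\ast})$. Since $T^{\ast}$ is connected, one of $X^{\ast},Y^{\ast}$ is contained in each part of the bipartition of $G$. Hence for any vertex $v$, all of $N_G(v)$ lies in one part of $G$, so $N_G(v)\cap V(T^{\ast})$ lies entirely in one of $X^{\ast},Y^{\ast}$ and therefore has size at most $\max\{|X|,|Y|\}$; the same holds for a copy of $T$ minus a leaf. Substituting this in place of ``$m-1$'' throughout gives the bound $\delta(G)\ge 2k+2m+\max\{|X|,|Y|\}-3$.

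For part~(i), girth at least $5$ forbids short cycles through $v$: if $z_1,z_2\in N_G(v)\cap V(T^{\ast})$ are distinct, then $z_1\not\sim_G z_2$ (else $vz_1z_2$ is a triangle) and $z_1,z_2$ have no common neighbour in $T^{\ast}$ other than possibly $v$ (else $vz_1wz_2$ is a $4$-cycle). Thus $N_G(v)\cap V(T^{\ast})$ splits into at most $\deg_{T^{\ast}}(v)$ vertices adjacent to $v$ in $T^{\ast}$ together with a set that is pairwise at distance at least $3$ in $T^{\ast}$; a distance-$3$ packing in a tree on at most $m$ vertices has size at most $\lceil\tfrac{m-1}{2}\rceil$. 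When $v\notin V(T^{\ast})$ this already gives $|N_G(v)\cap V(T^{\ast})|\le\tfrac{m-1}{2}$. When $v$ is the attachment vertex in the extension step, one is moreover free to choose which leaf of $T$ to peel off; peeling off a leaf whose $T$-neighbour has the smallest possible degree trades $\deg_{T^{\ast}}(v)$ against the size of the subtree hanging from $v$, and balancing the two extremes (``$T$ spider-like'' versus ``$T$ star-like'') yields the bound $\max\{\tfrac{m-1}{2},\Delta(T)\}$. Feeding this into the argument of Theorem~\ref{thm:main_trianglefree} produces the degree bound in~(i).

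I expect the main obstacle to be the constant-chasing in part~(i): one must verify that the distance-$3$ packing estimate together with the freedom in the choice of the removed leaf really produces $\max\{\tfrac{m-1}{2},\Delta(T)\}$ rather than a larger quantity in the intermediate-degree, spider-like cases, and one must confirm that every occurrence of ``$m-1$'' in the original proof --- including those hidden inside the fragment estimates --- is genuinely of this ``collision'' type, so that the substitution is legitimate at each place. Once this substitution principle is set up, the bipartite case~(ii) should follow mechanically.
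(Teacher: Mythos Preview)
Your proposal rests on a mistaken picture of the proof of Theorem~\ref{thm:main_trianglefree}. You assume that proof grows the tree leaf by leaf, and that the ``$m-1$'' enters as a collision bound $|N_G(v)\cap V(T^{\ast})|\le m-1$ controlling how many neighbours of the attachment vertex are already used. That is not what happens. The paper (proving Theorems~\ref{thm:main_trianglefree} and~\ref{thm:main_bip_g5} simultaneously) sets $p=k+m-1$, takes a $p$-connected triple $(S_1,S_2,F)$ with $|V(F)|$ minimal, uses Lemma~\ref{lem:matching} to find a matching $M$ between $S_1$ and $V(F)$ saturating $S_1$, and then shows that the induced subgraph $G[V(F)\setminus V(M)]$ has minimum degree at least $\beta:=\delta(G)-(2p-1)$. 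The tree $T$ is embedded \emph{all at once} inside this safe zone using an off-the-shelf embedding lemma, and Lemma~\ref{lem:k-connected} guarantees that deleting any $m$ vertices of $V(F)\setminus V(M)$ keeps $G$ $k$-connected. Thus the ``$m-1$'' in $2k+3m-4=2(k+m-1)+(m-1)-1$ is exactly the minimum-degree threshold in Lemma~\ref{lem:tree_embedding}; the refinements in Theorem~\ref{thm:main_bip_g5} come from swapping that lemma for Corollary~\ref{cor:tree_embedding_bipartite} (threshold $\max\{|X|,|Y|\}$) in the bipartite case and for the Brandt--Dobson theorem (Theorem~\ref{thm:g5}, threshold $\max\{\tfrac{m-1}{2},\Delta(T)\}$) in the girth-$5$ case. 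No collision estimate is involved.

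Your proposed scheme has a genuine gap independent of this misreading: even if one could bound $|N_G(v)\cap V(T^{\ast})|$ as you describe, a leaf-by-leaf extension does not explain why $k$-connectivity of the complement survives the extension step. Knowing that $G-V(T^{\ast})$ is $k$-connected for the smaller tree and that the attachment vertex has a neighbour outside $V(T^{\ast})$ gives no control over whether the chosen new leaf $w$ avoids creating a small cut in $G-(V(T^{\ast})\cup\{w\})$. The paper sidesteps this entirely by arranging that \emph{every} $m$-subset of the safe zone is removable. Your girth-$5$ packing argument is also incomplete: the distance-$3$ bound $\lceil\tfrac{m-1}{2}\rceil$ for the $v\notin V(T^{\ast})$ case is not justified (and is in fact weaker than what a careful count gives), and the balancing of the two extremes in the $v\in V(T^{\ast})$ case is only sketched.
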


While Conjecture~\ref{conj:bip} remains open, the best known previous result for bipartite graphs was obtained in Theorem~\ref{thm:LYH24}. Our result, Theorem~\ref{thm:main_bip_g5}~(ii), gives a further improvement. 

In the remainder of this paper, we prove the main results stated above. 
We first revisit a degree condition that guarantees the existence of a suitable subtree.
We use the notion of a connected triple, which is a structure that seems to have broad applications to graph connectivity problems. 
Our results also arise from an application of connected triples.

\section{Preliminaries}

\subsection{Tree embedding}

The following well-known lemma provides a sufficient minimum degree condition ensuring that every tree on $m$ vertices appears in a graph.

\begin{lemma}[\!\!\cite{Chvatal77}]\label{lem:tree_embedding}
    If $G$ is a graph of minimum degree at least $m-1$ for a positive integer $m$, then $G$ contains every tree on $m$ vertices. 
\end{lemma}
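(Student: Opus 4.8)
The plan is to prove Lemma~\ref{lem:tree_embedding} by induction on $m$, the order of the tree. The base case $m=1$ is immediate: a tree on one vertex is a single vertex, and any (nonempty) graph $G$ contains it, so there is nothing to verify beyond $G$ being nonempty. For the inductive hypothesis, I would assume that every graph of minimum degree at least $m-2$ contains every tree on $m-1$ vertices, and then take an arbitrary tree $T$ on $m\ge 2$ vertices together with a graph $G$ satisfying $\delta(G)\ge m-1$.

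The key structural observation is that every tree on at least two vertices has a leaf. So I would fix a leaf $v$ of $T$ and let $u$ denote its unique neighbor in $T$. Deleting $v$ yields a tree $T-v$ on $m-1$ vertices. Since $\delta(G)\ge m-1\ge (m-1)-1$, the inductive hypothesis applies to $T-v$, producing an embedding $\phi\colon V(T-v)\to V(G)$, that is, an injective map sending each edge of $T-v$ to an edge of $G$. The goal is then to extend $\phi$ across the single remaining vertex $v$.

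The extension step is where the degree hypothesis is used, and it reduces to a counting argument. To embed $v$ I must choose a vertex $w\in V(G)$ that is adjacent to $\phi(u)$ and that avoids the $m-1$ already-used images in $\phi(V(T-v))$. The vertex $\phi(u)$ has at least $m-1$ neighbors in $G$ by the minimum-degree hypothesis, while the set of used images that such a neighbor must avoid is $\phi(V(T-v))\setminus\{\phi(u)\}$, which has exactly $m-2$ elements. Since $m-1>m-2$, at least one neighbor $w$ of $\phi(u)$ lies outside $\phi(V(T-v))$; setting $\phi(v)=w$ extends $\phi$ to an embedding of $T$ into $G$, completing the induction.

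Because the statement is elementary, there is no genuine obstacle; the only point that requires care is the bookkeeping in the final inequality, namely that the number of forbidden neighbors, $m-2$, is strictly smaller than the guaranteed degree $m-1$, so that a free neighbor always exists regardless of which leaf and embedding were chosen.
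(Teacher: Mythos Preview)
Your argument is correct and is the standard greedy induction proof of this folklore lemma. The paper does not actually supply its own proof of Lemma~\ref{lem:tree_embedding}; it merely cites the result from~\cite{Chvatal77}, so there is nothing in the paper to compare your approach against.
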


Given two graphs $H$ and $G$, an {\it embedding} of $H$ into $G$ is an injective map from $V(H)$ to $V(G)$ preserving the adjacency.
That is, the existence of an embedding of $H$ into $G$ guarantees that there is a subgraph $H'$ of $G$ which is isomorphic to $H$.
For a vertex set $W\subseteq V(G)$, we let $\delta_G(W)=\min_{v\in W} d_G(v)$.

In a bipartite graph, the minimum degree condition ensuring the existence of a tree as a subgraph depends on the sizes of the partite sets of the tree.
The following lemma guarantees an embedding of a tree into a bipartite graph.

\begin{lemma}[\!\!\cite{HWL25}]\label{lem:tree_embedding_bipartite}
    Let $T$ be a tree with bipartition $(X,Y)$. For a bipartite graph $G$ with bipartition $(U,V)$,
    if $\delta_G(U) \ge |Y|$ and $\delta_G(V) \ge |X|$, then there exists an embedding $\phi:V(T) \to V(G)$ such that $\phi(X) \subseteq U$ and $\phi(Y) \subseteq V$.
\end{lemma}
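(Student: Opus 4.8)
The plan is to prove the statement by induction on the order $m=|X|+|Y|$ of $T$, constructing the embedding one vertex at a time while preserving the side condition $\phi(X)\subseteq U$ and $\phi(Y)\subseteq V$. The base case $m=1$ is immediate: the single vertex of $T$ lies in exactly one part, and we map it to any vertex of the corresponding part of $G$, which the degree hypothesis forces to be nonempty whenever an embedding is possible at all.

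For the inductive step I would fix a leaf $\ell$ of $T$ with unique neighbour $u$ and set $T'=T-\ell$. Since $T$ is connected and bipartite, $\ell$ and $u$ lie on opposite sides; assume without loss of generality that $\ell\in X$, so $u\in Y$. Then $T'$ is a tree with bipartition $(X\setminus\{\ell\},Y)$, and the hypotheses transfer to $T'$ because $\delta_G(U)\ge|Y|$ and $\delta_G(V)\ge|X|>|X\setminus\{\ell\}|$; the induction hypothesis thus provides an embedding $\phi\colon V(T')\to V(G)$ with $\phi(X\setminus\{\ell\})\subseteq U$ and $\phi(Y)\subseteq V$. To finish, I extend $\phi$ to $\ell$. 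The parent image $\phi(u)$ lies in $V$, so by bipartiteness all of its at least $\delta_G(V)\ge|X|$ neighbours lie in $U$; the $U$-vertices already occupied are precisely $\phi(X\setminus\{\ell\})$, numbering $|X|-1$. Hence at least one neighbour $w$ of $\phi(u)$ in $U$ is free, and setting $\phi(\ell)=w$ yields an embedding of $T$ with the required side condition. The case $\ell\in Y$ is symmetric, using $\delta_G(U)\ge|Y|$ and the bound $|Y|-1$ on the occupied vertices of $V$.

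The argument is elementary and I do not anticipate a genuine obstacle; the only point demanding care is the bookkeeping at the extension step, namely checking that at the moment $\ell$ is attached strictly fewer than $\delta_G(V)$ (respectively $\delta_G(U)$) vertices of the target side are used, so that a free neighbour of $\phi(u)$ is guaranteed. Equivalently, one could present the proof non-recursively: fix an ordering $v_1,\dots,v_m$ of $V(T)$ in which each $v_i$ with $i\ge 2$ has exactly one earlier neighbour, and embed the $v_i$ greedily in this order, the same counting ensuring a valid image at every step. I would adopt whichever of these two equivalent formulations reads more cleanly alongside the surrounding material.
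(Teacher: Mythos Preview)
Your argument is correct and is precisely the standard greedy tree-embedding proof. The paper does not actually supply its own proof of this lemma: it is quoted as a known result from \cite{HWL25} and used as a black box, so there is no in-paper argument to compare against. Your inductive extension step is sound, and the only slightly loose point is the base case $m=1$ (where the degree hypotheses are partly vacuous and one must note that the relevant side of $G$ is nonempty), but this is a harmless edge case that does not affect the main reasoning.
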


\begin{corollary}\label{cor:tree_embedding_bipartite}
    Let $G$ be a bipartite graph and $T$ be a tree with bipartition $(X,Y)$.
    If $\delta(G)\ge \max\{|X|,|Y|\}$, then there exists a subtree $T' \cong T$ of $G$.
\end{corollary}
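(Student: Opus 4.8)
The plan is to deduce this immediately from Lemma~\ref{lem:tree_embedding_bipartite}. Since $G$ is bipartite, I would first fix a bipartition $(U,V)$ of $G$, choosing one arbitrarily if $G$ happens to be disconnected. Every vertex of $U$ has all of its neighbours in $V$ and every vertex of $V$ has all of its neighbours in $U$, so in particular $\delta_G(U) \ge \delta(G)$ and $\delta_G(V) \ge \delta(G)$. By the hypothesis $\delta(G) \ge \max\{|X|,|Y|\}$ we therefore get both $\delta_G(U) \ge \max\{|X|,|Y|\} \ge |Y|$ and $\delta_G(V) \ge \max\{|X|,|Y|\} \ge |X|$, which are exactly the two inequalities required to invoke Lemma~\ref{lem:tree_embedding_bipartite}.

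Applying that lemma to $G$ (with its bipartition $(U,V)$) and the tree $T$ (with its bipartition $(X,Y)$) produces an embedding $\phi\colon V(T) \to V(G)$ with $\phi(X)\subseteq U$ and $\phi(Y)\subseteq V$. By definition an embedding is injective and adjacency-preserving, so the image $T' := \phi(T)$ is a subgraph of $G$ isomorphic to $T$; being isomorphic to a tree, it is itself a subtree of $G$. This gives the conclusion.

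I do not expect any genuine obstacle here, as this is a direct corollary; the only point meriting a word of care is that when $G$ is disconnected its bipartition is determined only component by component (up to swapping the two sides), but any globally consistent choice suffices precisely because the hypothesis $\delta(G)\ge\max\{|X|,|Y|\}$ is symmetric under interchanging $X$ and $Y$. One may also observe that the degenerate case where $T$ is a single vertex is trivially covered, since then $\delta(G)\ge 1$ already forces $V(G)\neq\emptyset$; this case is in any event subsumed by the general argument above.
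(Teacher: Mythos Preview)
Your proposal is correct and is exactly the intended route: the paper states this result as an immediate corollary of Lemma~\ref{lem:tree_embedding_bipartite} without giving a separate proof, and your argument spells out precisely that deduction. There is nothing to add or amend.
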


Erd\H{o}s and S\'{o}s in 1963 considered a problem of the existence of arbitrary tree as a subgraph, called Erd\H{o}s-S\'{o}s conjecture, asking if every graph of order $n$ with more than $\frac{n(k-1)}{2}$ edges contains every tree with $k$ edges as a subgraph.
The following result was one of attempts to solve Erd\H{o}s-S\'{o}s conjecture.

\begin{theorem}[\!\!\cite{BD96}]\label{thm:g5}
Let $G$ be a graph with $g(G) \ge 5$ and $T$ be a tree of order $m$. If $\delta(G)\ge \frac{m-1}{2}$ and $\Delta(G) \ge \Delta(T)$, then G contains  a subgraph $T'\cong T$. 
\end{theorem}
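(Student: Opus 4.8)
The plan is to prove Theorem~\ref{thm:g5} by induction on the order $m$ of $T$, at each step deleting one well-chosen leaf and extending a suitably chosen embedding of the resulting smaller tree. The cases $m \le 2$ are immediate. For the inductive step I would first isolate the ``near-star'' case: if some vertex $u$ of $T$ has $d_T(u) > \frac{m-1}{2}$, then $T$ has a very restricted shape (the set $N_T[u]$ already contains more than half of $V(T)$, so only small branches hang off $N_T(u)$), and $T$ can be embedded directly — map $u$ to a vertex $\rho$ of $G$ with $d_G(\rho) = \Delta(G) \ge \Delta(T) \ge d_T(u)$, then extend greedily, the girth-$5$ hypothesis ensuring that any two neighbourhoods met along the way share at most one vertex so that the greedy process never stalls. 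Hence I may assume $\Delta(T) \le \frac{m-1}{2}$.

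In the main case, let $P$ be a longest path of $T$, let $\ell$ be an endpoint of $P$ and $u$ its neighbour — so $u$ has at most one non-leaf neighbour (every neighbour of $u$ off $P$ is a leaf, else $P$ could be lengthened) — and put $T_0 = T - \ell$; then $|V(T_0)| = m - 1$, $\Delta(T_0) \le \Delta(T)$, and the induction hypothesis applies to $T_0$ since $\delta(G) \ge \frac{m-1}{2} \ge \frac{(m-1)-1}{2}$ and $\Delta(G) \ge \Delta(T) \ge \Delta(T_0)$. Among all embeddings $\phi$ of $T_0$ into $G$ (which exist by induction), I would pick one maximizing the number of neighbours of $v := \phi(u)$ lying outside $\phi(V(T_0))$. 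If this ``free degree'' of $v$ is positive, map $\ell$ to such a neighbour and we are done; so suppose $N_G(v) \subseteq \phi(V(T_0))$. The structural heart is the following consequence of $g(G) \ge 5$: every neighbour of $v$ in $\phi(V(T_0))$ is either the image of a $T_0$-neighbour of $u$ or the image of a vertex at $T_0$-distance at least $4$ from $u$, and the ``far'' neighbours are images of vertices pairwise at $T_0$-distance at least $3$ (a closer pair, both joined to $v$, would create a triangle or a $4$-cycle). Counting gives $d_G(v) = d_{T_0}(u) + f$, where $f$ is the number of far neighbours; since $d_{T_0}(u) = d_T(u) - 1 \le \Delta(T) - 1 < \frac{m-1}{2} \le \delta(G) \le d_G(v)$, we get $f \ge 1$, so there is a far neighbour $\phi(s)$ of $v$ with $d_{T_0}(u,s) \ge 4$.

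The final and hardest step is a rerouting argument: using the chord $v\phi(s)$ of $G$, I would re-embed the subtree $B$ of $T_0$ hanging at $s$ (on the side away from $u$) so that it no longer uses the vertex $\phi(s)$ — re-attaching it through a free neighbour of $\phi(s')$, where $s'$ is the $T_0$-parent of $s$, and greedily re-embedding the rest of $B$ into vertices not frozen by $\phi$ — thereby obtaining a new embedding of $T_0$ in which the image of $u$ has strictly larger free degree, contradicting the choice of $\phi$. The obstacle is making this exchange actually go through: one must produce enough room to re-embed $B$ while respecting both $\delta(G) \ge \frac{m-1}{2}$ and $\Delta(G) \ge \Delta(T)$, and ensure that no vertex is reused, so that the count genuinely improves. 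Here the girth-$5$ hypothesis — through the near-disjointness of neighbourhoods, which keeps the number of ``collisions'' met in the re-embedding well below $\frac{m-1}{2}$ — together with the choice of $\ell$ as the end of a longest path (so $B$ is small and $u$ has at most one non-leaf neighbour) is exactly what makes the bookkeeping come out, and organizing this exchange cleanly for an arbitrary tree is the main work.
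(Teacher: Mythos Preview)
The paper does not prove this statement at all: Theorem~\ref{thm:g5} is quoted verbatim from Brandt and Dobson \cite{BD96} and used only as a black box in the proof of Theorems~\ref{thm:main_trianglefree} and~\ref{thm:main_bip_g5}. There is therefore no ``paper's own proof'' to compare your attempt against.

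For what it is worth, your sketch is broadly along the lines of the original argument in \cite{BD96}: induction on $m$, deletion of a leaf at the end of a longest path, an extremal choice of the embedding of $T_0 = T - \ell$, and a girth-$5$ counting to locate a ``far'' vertex $s$ whose image is adjacent to $v = \phi(u)$. Your distance claims (far neighbours are at $T_0$-distance $\ge 4$ from $u$ and pairwise at $T_0$-distance $\ge 3$) are correct and follow exactly as you indicate. The part you flag as ``the main work'' --- the exchange that frees up $\phi(s)$ and strictly increases the free degree of $v$ --- is indeed where the care is needed; as written, your description (``re-attach $B$ through a free neighbour of $\phi(s')$ and greedily re-embed'') does not yet explain why $\phi(s')$ has a free neighbour, nor why the greedy re-embedding of $B$ succeeds under only $\delta(G) \ge \frac{m-1}{2}$, so that step would need to be made precise before the argument is complete.
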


\subsection{A connected triple}

Let $G$ be a graph. 
For a pair of vertices $u,v \in V(G)$, we denote by $p_G(u,v)$ the maximum number of internally vertex-disjoint $(u,v)$-paths in $G$.
Let $u$ and $v$ be nonadjacent vertices in $G$. 
A {\it $\{u,v\}$-separating set} is a subset $S$ of $V(G) \setminus \{u,v\}$ that $u$ and $v$ belong to different components of $G-S$. 

\begin{theorem}[Menger's Theorem~\cite{BM08}]\label{thm:menger}
Let $G$ be a graph. For a pair of nonadjacent vertices $(u,v)$, the maximum number of pairwise internally vertex-disjoint $(u,v)$-paths is equal to the minimum cardinality of a $\{u, v\}$-separating set. 
\end{theorem}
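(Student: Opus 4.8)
The statement is the classical local (internally vertex-disjoint) form of Menger's theorem, so the plan is to reconstruct its standard proof: establish the two inequalities separately. Throughout write $s=s(u,v)$ for the minimum cardinality of a $\{u,v\}$-separating set and $p=p_G(u,v)$ for the maximum number of internally vertex-disjoint $(u,v)$-paths. The inequality $p\le s$ is immediate: fix a minimum $\{u,v\}$-separating set $S$. Since $u$ and $v$ lie in different components of $G-S$ and $u,v\notin S$, every $(u,v)$-path must contain an \emph{internal} vertex of $S$; because the internal vertices of pairwise internally disjoint paths are disjoint, any such family meets $S$ in that many distinct vertices, giving at most $|S|=s$ paths.

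For the reverse inequality $p\ge s$ I would induct on $|E(G)|$, writing $k=s$ and aiming to exhibit $k$ internally disjoint paths. The main engine is a splitting reduction at a minimum separator. Suppose $G$ has a minimum separator $S$ (so $|S|=k$) with $S\ne N(u)$ and $S\ne N(v)$. Let $C_u,C_v$ be the vertex sets of the components of $G-S$ containing $u$ and $v$, and form two auxiliary graphs $G_u$, obtained from $G[C_u\cup S]$ by adding a new vertex $v^*$ joined to every vertex of $S$, and symmetrically $G_v$ from $G[C_v\cup S]$ with a new vertex $u^*$. One verifies that (a) each of $G_u,G_v$ has strictly fewer edges than $G$, which is exactly where the hypothesis $S\ne N(u),N(v)$ is used, since it forces the discarded side to contribute more than $|S|$ edges; and (b) $S$ is still a minimum $\{u,v^*\}$-separator of $G_u$, because a separator of size $<k$ there would combine with the $C_v$-side to separate $u$ from $v$ in $G$ with fewer than $k$ vertices. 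By the induction hypothesis $G_u$ has $k$ internally disjoint $(u,v^*)$-paths; truncating each at its first vertex of $S$ yields $k$ paths from $u$ to $k$ \emph{distinct} vertices of $S$, with all internal vertices in $C_u$ (distinctness follows from internal disjointness). Symmetrically $G_v$ gives $k$ such paths from $v$ into $S$, and since $C_u\cap C_v=\emptyset$ the two fans meet only in $S$, so concatenating them through each vertex of $S$ produces $k$ internally disjoint $(u,v)$-paths in $G$.

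It remains to treat the degenerate case in which every minimum separator equals $N(u)$ or $N(v)$, where the splitting above fails to shrink the edge count. Here, since $u$ and $v$ are nonadjacent, I would pick a suitable edge $e=xy$ on a shortest $(u,v)$-path with $x,y\notin\{u,v\}$ and pass to the contraction $G/e$, which has strictly fewer edges and keeps $u,v$ nonadjacent. The key point is that the minimum $\{u,v\}$-separator of $G/e$ is still $k$: otherwise a separator of size $k-1$ in $G/e$ would lift to a minimum separator of $G$ that is neither $N(u)$ nor $N(v)$, contradicting the case assumption. Induction then gives $k$ internally disjoint paths in $G/e$, which lift to $k$ internally disjoint paths in $G$.

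I expect this degenerate case to be the main obstacle. All the delicate bookkeeping lives here and in step~(b) above: confirming that the auxiliary graphs are genuinely smaller, that neither construction drops the minimum separator below $k$, correctly handling the boundary situation where $u$ and $v$ are at distance two (so that a shortest path has no internal edge to contract), and checking that separators lift cleanly under contraction. These are precisely the points that force the standard proofs to organize the induction around a carefully chosen edge or separator rather than an arbitrary one, and they are where a self-contained write-up must argue with care rather than invoke routine manipulation.
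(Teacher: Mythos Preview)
The paper does not prove Menger's theorem at all: it is stated with a citation to Bondy--Murty~\cite{BM08} and used as a black box, so there is no ``paper's own proof'' to compare against. Your outline is the standard textbook induction (essentially the proof one finds in Bondy--Murty or Diestel), and the easy direction together with the separator-splitting reduction are correctly described.

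That said, your handling of the degenerate case is only a sketch, and the difficulties you flag are real. When every minimum separator equals $N(u)$ or $N(v)$, the contraction argument as you have written it does not quite go through: you need an edge $xy$ with $x,y\notin\{u,v\}$ on a shortest $(u,v)$-path, but if $\mathrm{dist}(u,v)=2$ no such edge exists, and you have not indicated how to finish in that situation (the usual fix is to observe that then $N(u)\cap N(v)$ already furnishes the required $k$ length-two paths, or to delete rather than contract an appropriately chosen edge). Also, the lifting step---that a size-$(k-1)$ separator in $G/e$ would yield a minimum separator of $G$ that is neither $N(u)$ nor $N(v)$---requires a short argument you have not supplied. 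These are exactly the points a full write-up must nail down; the overall strategy is sound.
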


For a vertex set $U\subseteq V(G)$, we let $\kappa_G(U) = \min_{u,v\in U} p_G(u,v)$.
By Theorem~\ref{thm:menger}, for a positive integer $p$, $\kappa_G(U)\ge p+1$ implies that the vertices in $U-R$ lie in the same connected component of $G-R$ for every vertex set $R\subseteq V(G)$ of size at most $p$.
Note that $\kappa_G(V(G)) = \kappa(G)$.
A connected component of a graph is said to be {\it non-trivial} if it has at least two vertices.

Liu, Ying, and Hong~\cite{LYH24} introduced a special structure, called a {\it $p$-connected triple} (see Figure~\ref{fig:triple} for an illustration).

\begin{definition}[\!\!\cite{LYH24}]\label{def:contri}
Let $G$ be a graph and $S_1, S_2 \subseteq V(G)$.
Let $F$ be a nontrivial connected component of $G-(S_1 \cup S_2)$ ($F = G- (S_1 \cup S_2)$ is allowed).
For a positive integer $p$, $(S_1, S_2, F)$ is called a $p$-connected triple of $G$ if
\begin{itemize}
\item[(i)] $S_1 \cap S_2 = \emptyset$ and $|S_1 \cup S_2| \le 2p-1$,
\item[(ii)] $|N_G(v) \cap V(F)| \le p$ for any $v \in S_1$,
\item[(iii)] $\kappa_{G[S_1 \cup S_2\cup V(F)]} (S_2 \cup V(F)) \ge p+1$.
\end{itemize}
\end{definition}

\begin{figure}[!ht]
\centering
\includegraphics[page = 1,height=4cm]{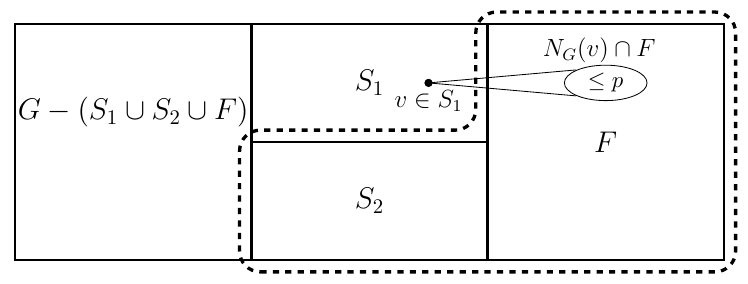}
\caption{A $p$-connected triple $(S_1,S_2,F)$ of $G$, where the region surrounded by dotted lines is related to the condition (iii) of Definition~\ref{def:contri}} \label{fig:triple}
\end{figure}

They also showed the existence of a $p$-connected triple as follows.

\begin{lemma}[\!\!\cite{LYH24}]\label{lem:contriexists}
Let $G$ be a graph with $\delta(G) \ge 2p$ and let $S_0 \subseteq V(G)$ with $|S_0| \le 2p-1$ for a positive integer $p$.
Then for any connected component $C$ of $G- S_0$, there exists a $p$-connected triple $(S_1, S_2, F)$ such that $V(F) \subseteq V(C)$.
\end{lemma}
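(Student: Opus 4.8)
The plan is to prove the lemma by induction on $|V(C)|$. The hypothesis $\delta(G)\ge 2p$ is used first to force $|V(C)|\ge 2$: a one-vertex component would have all of its at least $2p$ neighbours inside $S_0$, contradicting $|S_0|\le 2p-1$. Write $B_0:=N_G(V(C))$, so that $B_0\subseteq S_0$, $|B_0|\le 2p-1$, and $C$ is exactly a component of $G-B_0$. I would pass to the induced subgraph $H:=G[V(C)\cup B_0]$ --- where every vertex of $V(C)$ keeps \emph{all} of its $G$-neighbours --- and split according to whether $\kappa_H(V(C))\ge p+1$ or $\kappa_H(V(C))\le p$.

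In the first subcase, output the triple with $F:=G[V(C)]$, $S_1:=\{v\in B_0 :\ |N_G(v)\cap V(C)|\le p\}$ and $S_2:=B_0\setminus S_1$: conditions (i), (ii) are immediate and $F$ is a nontrivial component of $G-(S_1\cup S_2)=G-B_0$, while condition (iii), i.e.\ $\kappa_H(S_2\cup V(C))\ge p+1$, follows from $\kappa_H(V(C))\ge p+1$ since every $v\in S_2$ has at least $p+1$ neighbours in $V(C)$, by the standard fan principle that adjoining to a set of internal connectivity $\ge t$ a vertex with $\ge t$ neighbours in it keeps internal connectivity $\ge t$. (Equivalently: begin with $(B_0,\emptyset,G[V(C)])$, which satisfies (i) and (iii) outright because $S_2=\emptyset$, and repeatedly move a vertex of $B_0$ having $\ge p+1$ neighbours in $V(C)$ from $S_1$ into $S_2$, invoking the fan principle to preserve (iii) and terminating because $|S_1|$ decreases.)

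In the second subcase, fix a set $T$ with $|T|\le p$ whose deletion from $H$ places two vertices of $V(C)$ in distinct components (if those two vertices are adjacent, first delete the joining edge; this forces $|T|\le p-1$ and adds one tracked element below). Let $C_1,\dots,C_r$ with $r\ge 2$ be the components of $H-T$ meeting $V(C)$, and set $W_i:=V(C_i)\cap V(C)\ne\emptyset$. The key observation is that the sets $V(C_i)\setminus V(C)=V(C_i)\cap B_0$ are pairwise disjoint subsets of $B_0$, so $\sum_i|V(C_i)\cap B_0|\le|B_0|\le 2p-1$ and some $i$ has $|V(C_i)\cap B_0|\le p-1$. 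For that $i$, every vertex of $W_i$ has all its $G$-neighbours in $V(H)$, hence in $V(C_i)\cup T$, which gives $N_G(W_i)\subseteq T\cup(V(C_i)\cap B_0)$ (together with at most one extra vertex in the adjacent case), so $|N_G(W_i)|\le 2p-1$. Since $N_G(W_i)$ contains the whole boundary of $W_i$, each component of $G[W_i]$ is a component of $G-N_G(W_i)$, and at least one such component $C'$ is nontrivial (otherwise some vertex of $W_i$ would have all $\ge 2p$ of its neighbours in $N_G(W_i)$). Because $r\ge 2$, $|V(C')|\le|W_i|<|V(C)|$, so the induction hypothesis applied to $(G,N_G(W_i),C')$ produces a $p$-connected triple $(S_1,S_2,F)$ with $V(F)\subseteq V(C')\subseteq V(C)$, as needed.

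I expect the heart of the argument --- and the step most likely to go wrong on a first attempt --- to be choosing the right sub-instance in the second subcase: the naive candidates for the new separator, such as $S_0\cup T$ or $B_0\cup T$, overshoot the bound $2p-1$, and the fix is to descend into a single component $W_i$ whose share of $B_0$ is small by an averaging argument, so that $N_G(W_i)$ (essentially $T$ together with that small share) still fits within $2p-1$; this is precisely why the hypothesis is $|S_0|\le 2p-1$ rather than something larger. The subsidiary points --- that $\kappa_H(\cdot)$ genuinely governs connectivity in $G$ because $H$ is induced and $V(C)$-vertices keep all neighbours in $V(H)$, the bookkeeping for an adjacent separated pair, and stating the fan principle for possibly-adjacent vertex pairs --- are routine.
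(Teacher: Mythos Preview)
The paper does not supply its own proof of this lemma: it is quoted from Liu, Ying, and Hong~\cite{LYH24} and invoked as a black box in Lemma~\ref{lem:matching} and in the proof of the main theorems. There is therefore no in-paper argument against which to compare your proposal.

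That said, your argument is correct and is the natural one. Both branches of the dichotomy on $\kappa_H(V(C))$ are handled properly: in the high-connectivity branch, the choice $S_2=\{v\in B_0:|N_G(v)\cap V(C)|\ge p+1\}$ together with the iterated fan lemma verifies condition~(iii); in the low-connectivity branch, the pigeonhole step ($\sum_i|V(C_i)\cap B_0|\le|B_0|\le 2p-1$ with $r\ge2$, hence some $|V(C_i)\cap B_0|\le p-1$) is indeed the crux, and your bookkeeping for the adjacent-pair case (trading one unit of $|T|$ for one extra boundary vertex) is correct. The only point a clean write-up should make explicit is that the contradiction ``every component of $G[W_i]$ is trivial $\Rightarrow d_G(v)\le|N_G(W_i)|\le 2p-1$ for some $v\in W_i$'' already rules out $|W_i|=1$, so the induction genuinely terminates.
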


\section{Proof of Theorems~\ref{thm:main_trianglefree} and \ref{thm:main_bip_g5}}

It is well-known that for triangle-free graphs, the following hold.

\begin{fact}\label{fact:trifree}
Let $G$ be a triangle-free graph.
Then the following are true.
\begin{itemize}
\item[\rm(a)] For any two adjacent vertices $u$ and $v$, $N_G(u)$ and $N_G(v)$ are disjoint. 
\item[\rm(b)] There is a vertex $v \in V(G)$ with $d_G(v) \le \frac{|V(
G)|}{2}$. 
\end{itemize}
\end{fact}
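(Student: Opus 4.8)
The plan is to derive both assertions directly from the absence of triangles, using nothing beyond the definitions of neighborhood and degree; no embedding lemmas or connected triples are needed.

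For part (a), I would argue by contradiction. Suppose $u$ and $v$ are adjacent yet some vertex $w$ lies in $N_G(u)\cap N_G(v)$. Since $G$ is simple (no loops), $w\ne u$ and $w\ne v$, and the three edges $uv$, $uw$, and $vw$ are all present in $G$. Thus $\{u,v,w\}$ spans a triangle, contradicting the triangle-freeness of $G$. Hence no such $w$ exists and $N_G(u)\cap N_G(v)=\emptyset$, which is exactly the claim.

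For part (b), I would first dispose of the degenerate case: if $G$ has no edge, then every vertex has degree $0\le\frac{|V(G)|}{2}$, so the assertion holds trivially. Otherwise, pick any edge $xy$. By part (a), the sets $N_G(x)$ and $N_G(y)$ are disjoint subsets of $V(G)$, so
\[
d_G(x)+d_G(y)=|N_G(x)|+|N_G(y)|=|N_G(x)\cup N_G(y)|\le |V(G)|.
\]
Consequently $\min\{d_G(x),d_G(y)\}\le\frac{|V(G)|}{2}$, and whichever of $x,y$ attains this minimum is a vertex $v$ with $d_G(v)\le\frac{|V(G)|}{2}$, as required.

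There is essentially no obstacle here, since both statements are elementary. The only point demanding a little care is the edgeless case in (b): part (a) presupposes an edge to which it can be applied, so a graph with no edges must be handled separately before reducing the remaining case to a single invocation of (a).
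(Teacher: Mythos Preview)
Your proof is correct. The paper does not actually prove this fact at all---it is stated as ``well-known'' and left without argument---so your elementary derivation of both parts (contradiction for (a), then summing degrees along an edge for (b), with the edgeless case handled separately) supplies exactly what the paper omits.
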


Moreover, we use the following classical result.
Let $G$ be a graph. 
For a graph $G$ and disjoint vertex sets $U$ and $V$ of $G$, a {\it matching} between $U$ and $V$ is an edge set $M \subseteq E(G)$ such that every edge has one endpoint in $U$ and the other in $V$, and no two edges in $M$ share a common vertex. 
For a vertex set $W\subseteq V(G)$, a matching $M$ in $G$ {\it saturates} $W$ if every vertex in $W$ is incident with an edge of $M$.

\begin{theorem}[Hall's Theorem~\cite{BM08}]\label{thm:hall}
For a bipartite graph $G$ with bipartition $(X,Y)$, $G$ has a matching which saturates $X$ if and only if $|N_G(S)| \ge |S|$ for all $S \subseteq X$. 
\end{theorem}

To prove Theorems~\ref{thm:main_trianglefree} and \ref{thm:main_bip_g5}, we present two useful lemmas.
For a vertex set $W\subseteq V(G)$, we let $N_G(W)=\bigcup_{v \in W}N_G(v)\setminus W$.

\begin{lemma}\label{lem:matching}
    Let $G$ be a triangle-free graph with $\delta(G)\ge 2p$ for a positive integer $p$.
    Suppose that $(S_1,S_2,F)$ is a $p$-connected triple of $G$ such that $|V(F)|$ is as small as possible.
    Then $|V(F)|>|S_1|$ and there exists a matching $M$ between $S_1$ and $V(F)$ that saturates $S_1$.
\end{lemma}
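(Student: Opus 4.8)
The plan is to prove the two assertions separately. The bound $|V(F)|>|S_1|$ is a short triangle‑free degree count, while the matching rests on Hall's theorem combined with the minimality of $|V(F)|$.

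For $|V(F)|>|S_1|$ I would in fact establish $|V(F)|\ge 2p+1$. Since $\delta(G)\ge 2p>2p-1\ge|S_1\cup S_2|$ and $F$ is a component of $G-(S_1\cup S_2)$, every vertex of $F$ has a neighbour inside $V(F)$; fix adjacent $u,w\in V(F)$. By Fact~\ref{fact:trifree}(a), $N_G(u)\cap N_G(w)=\emptyset$, and both neighbourhoods lie in $S_1\cup S_2\cup V(F)$, so $|S_1\cup S_2|+|V(F)|\ge|N_G(u)|+|N_G(w)|\ge 4p$; hence $|V(F)|\ge 4p-(2p-1)=2p+1>2p-1\ge|S_1|$.

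For the matching, let $H$ be the bipartite graph between $S_1$ and $V(F)$ induced by the edges of $G$. By Hall's theorem (Theorem~\ref{thm:hall}) it suffices to show $|N_H(S)|\ge|S|$ for every nonempty $S\subseteq S_1$; suppose not and pick such an $S$ with $|S|$ smallest, so $A:=N_G(S)\cap V(F)$ has $|A|=|S|-1$. The key observation is that $F-A:=G[V(F)\setminus A]$ has minimum degree at least $2$: a vertex $u\in V(F)\setminus A$ has no neighbour in $S$, so $|N_G(u)\cap(S_1\cup S_2)|\le|S_1\cup S_2|-|S|$, whence $|N_G(u)\cap V(F)|\ge|S|+1$ (using $d_G(u)\ge 2p$ and $N_G(u)\subseteq S_1\cup S_2\cup V(F)$), and removing the at most $|A|=|S|-1$ neighbours in $A$ still leaves two in $V(F)\setminus A$; in particular every component of $F-A$ is nontrivial. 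Assume $|S|\ge 2$, so $|A|\ge 1$; let $F^{\ast}$ be a component of $F-A$ and put $S_1^{\ast}=S_1\setminus S$, $S_2^{\ast}=S_2\cup A$. As $V(F^{\ast})\cap A=\emptyset$, no vertex of $F^{\ast}$ is adjacent to $S$, so $F^{\ast}$ is a (nontrivial) component of $G-(S_1^{\ast}\cup S_2^{\ast})$ with $|V(F^{\ast})|\le|V(F)|-|A|<|V(F)|$. I claim $(S_1^{\ast},S_2^{\ast},F^{\ast})$ is a $p$-connected triple, which contradicts minimality: part (i) of Definition~\ref{def:contri} holds since $S_1^{\ast}\cap S_2^{\ast}=\emptyset$ and $|S_1^{\ast}\cup S_2^{\ast}|=|S_1\cup S_2|-|S|+|A|=|S_1\cup S_2|-1\le 2p-1$, and part (ii) holds since $|N_G(v)\cap V(F^{\ast})|\le|N_G(v)\cap V(F)|\le p$ for every $v\in S_1^{\ast}\subseteq S_1$.

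The crux is part (iii): with $G'=G[S_1\cup S_2\cup V(F)]$ and $G^{\ast}=G[S_1^{\ast}\cup S_2^{\ast}\cup V(F^{\ast})]=G'-D$, where $D=S\,\cup\,(V(F)\setminus(A\cup V(F^{\ast})))$, one must deduce $\kappa_{G^{\ast}}(S_2\cup A\cup V(F^{\ast}))\ge p+1$ from $\kappa_{G'}(S_2\cup V(F))\ge p+1$. The structural facts to exploit are that $D$ splits into $S$ and the components of $F-A$ other than $F^{\ast}$, that these blocks are pairwise non-adjacent in $G'$, and that each attaches only to $(S_1\setminus S)\cup S_2\cup A$; using these one takes a hypothetical $\{x,y\}$-separator of size at most $p$ in $G^{\ast}$ (with $x,y\in S_2\cup A\cup V(F^{\ast})$), decomposes each surviving $(x,y)$-path of $G'$ into segments lying in $G^{\ast}$ joined by excursions into single blocks of $D$, and reroutes or lifts through the block boundaries to obtain a separator of size at most $p$ in $G'$, contradicting condition (iii) of the original triple via Menger's theorem (Theorem~\ref{thm:menger}). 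I expect this block analysis — together with the leftover case $|S|=1$, i.e. a vertex $v\in S_1$ with no neighbour in $V(F)$, which must be excluded in a minimal triple by a separate argument using minimality of $|V(F)|$ (e.g. by applying Lemma~\ref{lem:contriexists} to $G-((S_1\setminus\{v\})\cup S_2)$ and the component $F$, or by checking that deleting $v$ from $G'$ cannot spoil $\kappa_{\,\cdot\,}(S_2\cup V(F))\ge p+1$ since all neighbours of $v$ in $G'$ lie in $S_1\cup S_2$) — to be the main obstacle.
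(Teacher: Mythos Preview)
Your degree count for $|V(F)|\ge 2p+1>|S_1|$ is correct and is essentially the same computation the paper uses (the paper extracts it as the special case $S=S_1$ of the stronger Hall inequality $|N_G(S)\cap V(F)|\ge |S|+1$). The real gap is in the second half.

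Your attempt to manufacture a new $p$-connected triple $(S_1^{\ast},S_2^{\ast},F^{\ast})$ by hand and then verify Definition~\ref{def:contri}(iii) for it is exactly where the argument breaks down, and you say so yourself: the ``rerouting through block boundaries'' sketch is not a proof. Concretely, an $(x,y)$-path in $G'$ may leave $G^{\ast}$ through a vertex $a_1\in (S_1\setminus S)\cup S_2\cup A$, wander through a block of $D$, and re-enter at some $a_2$; to replace this excursion you would need an $(a_1,a_2)$-path inside $G^{\ast}$ avoiding your hypothetical separator $Z$, and nothing you have set up guarantees that. The case split on $|S|=1$ versus $|S|\ge 2$, and the choice of a minimal violating $S$ to force $|A|=|S|-1$, are further complications that do not help with this core difficulty.

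The paper sidesteps all of this with a one-line move you only invoke in the parenthetical for $|S|=1$: given \emph{any} $S\subseteq S_1$ with $K:=N_G(S)\cap V(F)$ of size at most $|S|$, set $S_0'=(S_1\setminus S)\cup S_2\cup K$, note $|S_0'|\le 2p-1$, observe that $V(F)\setminus K$ (when nonempty) is a union of components of $G-S_0'$, and apply Lemma~\ref{lem:contriexists} to any such component to obtain a $p$-connected triple with strictly smaller $F$. Condition~(iii) then comes for free from the lemma; you never have to check it. The remaining case $V(F)=K$ is disposed of by your own triangle-free degree count. So the missing idea is simply: do not build the smaller triple explicitly---let Lemma~\ref{lem:contriexists} build it for you.
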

\begin{proof}
We want to claim that, for any $S \subseteq S_1$, $|N_G(S) \cap V(F)| \ge |S|+1$.
Suppose, to the contrary, that there exists $S \subseteq S_1$ such that $|N_G(S) \cap V(F)| \le |S|$.
Let $K = N_G(S) \cap V(F)$ and $S_0' = (S_1 \setminus S) \cup S_2 \cup K$.
Then $|K| \le |S|$ and so $|S_0'| \le |S_1 \cup S_2| \le 2p-1$ by Definition~\ref{def:contri}(i).
If $V(F) \setminus K \neq \emptyset$, then $V(F)\setminus K$ is a connected component of $G-S_0'$ by Definition~\ref{def:contri}~(iii) and so, by Lemma~\ref{lem:contriexists} applied to $S_0'$ and $V(F)\setminus K$, there is a $p$-connected triple $(S_1',S_2',F')$ with $V(F')\subseteq V(F) \setminus K\subsetneq V(F)$, which contradicts the choice of $F$.
Therefore $V(F) = K$.
If there is no edge in $F$, then for any $u\in V(F)$, $N_G(u)\subseteq S_1 \cup S_2$, so $2p \le d_G(u) \le |S_1\cup S_2| \le 2p-1$, a contradiction.
Thus there is an edge in $F$.
Take an edge $uv$ in $F$.
Then $N_G(u)$ and $N_G(v)$ are disjoint by Fact~\ref{fact:trifree}(a).
Thus, since $V(F)=K$ and $|K|\le |S| \le |S_1|$,
\[
4p \le 2\delta(G) \le d_G(u)+ d_G(v) \le |S_1|+|S_2|+|V(F)| \le |S_1|+|S_2| +|S| \le 2(|S_1|+|S_2|)\le 4p-2,
\]
which is a contradiction.
Therefore for any $S \subseteq S_1$, $|N_G(S) \cap V(F)| \ge |S|+1$.
Then \[|V(F)|\ge |N_G(S_1) \cap V(F)| \ge |S_1|+1>|S_1|.\]
Moreover, by Theorem~\ref{thm:hall}, there is a matching $M$ which saturates $S_1$ such that every edge in $M$ has one endpoint in $S_1$ and the other in $V(F)$.
\end{proof}

\begin{lemma}\label{lem:k-connected}
    Let $G$ be a $k$-connected triangle-free graph with $\delta(G)\ge 2p$ for positive integers $p$ and $k$ satisfying $p\ge k$.
    Suppose that $(S_1,S_2,F)$ is a $p$-connected triple of $G$ and there exists a matching $M$ between $S_1$ and $V(F)$ that saturates $S_1$. 
    Then $G-R$ is still $k$-connected for any vertex set $R \subseteq V(F)\setminus V(M)$ of size $p-k+1$. 
\end{lemma}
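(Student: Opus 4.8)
The plan is to argue by contradiction. Suppose $G-R$ is not $k$-connected for some $R\subseteq V(F)\setminus V(M)$ with $|R|=p-k+1$; then there is a set $Z\subseteq V(G)\setminus R$ with $|Z|\le k-1$ such that $G-(R\cup Z)$ is disconnected, and since $R$ and $Z$ are disjoint, $|R\cup Z|\le p$. The first step is to locate a distinguished component. Applying condition~(iii) of Definition~\ref{def:contri} together with the Menger-type observation recorded after Theorem~\ref{thm:menger} to $G':=G[S_1\cup S_2\cup V(F)]$ --- whose intersection with $R\cup Z$ has size at most $p$ --- all vertices of $A:=(S_2\cup V(F))\setminus(R\cup Z)$ lie in a single component $D$ of $G-(R\cup Z)$. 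To see $A\neq\emptyset$ (so that $D$ is well defined and proper), I would first note that $F$, being a nontrivial component of $G-(S_1\cup S_2)$, contains an edge $uv$; since $N_G(u),N_G(v)\subseteq S_1\cup S_2\cup V(F)$ are disjoint by Fact~\ref{fact:trifree}(a), a one-line count gives $|V(F)|\ge 2\delta(G)-|S_1\cup S_2|\ge 2p+1$, so $V(F)\setminus(R\cup Z)\neq\emptyset$.

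Next I would study an arbitrary component $D'\neq D$ of $G-(R\cup Z)$. Since $A\subseteq D$, the set $D'$ meets neither $S_2$ nor $V(F)$, so $D'\subseteq(S_1\cup W)\setminus Z$, where $W:=V(G)\setminus(S_1\cup S_2\cup V(F))$. Two features of this situation drive the proof. First, every $u\in D'\cap S_1$ is matched by $M$ to a vertex $m(u)\in V(F)$; since $R$ avoids $V(M)$ we have $m(u)\notin R$, and since $m(u)\in V(F)$ it is not in $D'$, so $m(u)\in Z$ --- and as $M$ is a matching this yields an injection from $D'\cap S_1$ into $Z\cap V(F)$. Second, if $v\in D'\cap W$ then $v$ lies in some component $F_v\neq F$ of $G-(S_1\cup S_2)$, so $N_G(v)\subseteq S_1\cup S_2\cup V(F_v)$ is disjoint from $V(F)$, hence from $R$; consequently every neighbor of $v$ lying outside $D'\cap W$ lies in $(D'\cap S_1)\cup(N_G(D')\setminus V(F))$.

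The contradiction then comes from two cases. If $D'\subseteq S_1$, then for any $v\in D'$ we have $N_G(v)\subseteq(D'\setminus\{v\})\cup R\cup Z$, and since the injection above gives $|D'|\le|Z|\le k-1$, this forces $2p\le d_G(v)\le(k-2)+p$, i.e.\ $p\le k-2$, contradicting $p\ge k$. Otherwise $W':=D'\cap W\neq\emptyset$, and I would consider the set $C:=(D'\cap S_1)\cup(N_G(D')\setminus V(F))$. The second feature above gives $N_G(W')\subseteq W'\cup C$, so in $G-C$ the nonempty set $W'$ is separated from $V(G)\setminus(W'\cup C)$, which is also nonempty since it contains $V(F)\setminus(R\cup Z)$ while $C$ is disjoint from $V(F)$. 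Finally, $N_G(D')\setminus V(F)\subseteq(R\cup Z)\setminus V(F)=Z\setminus V(F)$, so using the injection once more, $|C|\le|D'\cap S_1|+|Z\setminus V(F)|\le|Z\cap V(F)|+\big(|Z|-|Z\cap V(F)|\big)=|Z|\le k-1$, contradicting $\kappa(G)\ge k$.

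I expect the delicate point to be the accounting in the last case: one must check simultaneously that $C$ genuinely separates $W'$ from the rest, that $C$ is disjoint from both $W'$ and $V(F)$, and that the matching exactly pays for relocating $D'\cap S_1$ into the separator. This last balance is precisely why the hypothesis $R\subseteq V(F)\setminus V(M)$ is essential --- it guarantees that the matched partner $m(u)$ of each $u\in D'\cap S_1$ is forced into $Z$ rather than possibly lying among the deleted vertices $R$, so that every such $u$ can be traded against a distinct element of $Z\cap V(F)$.
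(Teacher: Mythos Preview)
Your argument is correct and follows essentially the same route as the paper's proof: assume a small cut $Z$ of $G-R$, observe that $(S_2\cup V(F))\setminus(R\cup Z)$ lies in a single component, pick a component $D'$ avoiding $S_2\cup V(F)$, and use the matching to trade $D'\cap S_1$ against $Z\cap V(F)$ in the accounting. The only notable difference is in the case $D'\subseteq S_1$: the paper invokes Fact~\ref{fact:trifree}(b) to find $w\in D'$ with $d_{G[D']}(w)\le |D'|/2$ and deduce $|D'|\ge 2p$, whereas you use the matching injection directly to get $|D'|\le k-1$ and a degree contradiction --- your variant is a bit cleaner and in fact does not need triangle-freeness in that step (your only genuine use of it is the bound $|V(F)|\ge 2p+1$, which, as in the paper, is not strictly required for the existence of $D'$).
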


\begin{figure}[!ht]
\centering
\includegraphics[page = 2,height=6cm]{figure.pdf}
\caption{An illustration for the proof of Lemma~\ref{lem:k-connected}} \label{fig:proof}
\end{figure}

\begin{proof}
Let $F_M=V(M) \cap V(F)$.
Suppose, to the contrary, that there exist a vertex set $R \subseteq V(F)\setminus V(M)$ with $|R|=p-k+1$, and a cut $A$ with $|A| \le k-1$ in $G-R$. (See Figure~\ref{fig:proof} for an illustration, where $C_1$ and $C_2$ will be defined later.)
Then $A \cup R$ is a cut in $G$ of order at most $p$.
By Definition~\ref{def:contri}~(iii), $\kappa_G(S_2\cup V(F)) \ge \kappa_{G[S_1\cup S_2 \cup V(F)]}(S_2 \cup V(F)) \ge p+1$.
Therefore the remaining vertices of $S_2 \cup V(F)$ in $G-(A\cup R)$ must be contained in the same connected component.
Thus, there exists a connected component $C$ of $G- (A\cup R)$ such that $C \cap (S_2 \cup V(F)) = \emptyset$.
We note that $N_G(V(C)) \subseteq A \cup R$.
Let
\[C_1 = V(C) \cap S_1, \quad C_2 = V(C) \setminus S_1, \quad A_1 = A \cap V(F), \quad A_2 = A \setminus V(F), \quad \text{and} \quad A_3 = N_G(C_1) \cap F_M. \]
Then, since the matching $M$ saturates $S_1$, $|A_3| \ge |C_1|$. Moreover, \[A_3 = N_G(C_1) \cap F_M \subseteq N_G(V(C)) \cap (V(F) \setminus R) \subseteq (A\cup R) \cap (V(F) \setminus R)= A\cap V(F) = A_1.\]
To the contrary, suppose that $C_2 \neq \emptyset$.
Then $N_G(C_2) \subseteq N_G(V(C))\cup C_1
\subseteq A \cup R \cup C_1$.
Since $N_G(V(F))\subseteq S_1 \cup S_2$ and $C_2 \cap (S_1 \cup S_2)=\emptyset$, $N_G(C_2)\cap V(F)=\emptyset$ and so $N_G(C_2) \subseteq (A \cup R \cup C_1)\setminus V(F)=(A\setminus V(F))\cup C_1=A_2 \cup C_1$.
Thus
\[ |N_G(C_2)| \le |A_2| + |C_1| \le |A_2|+|A_3| \le |A_2|+|A_1| = |A| \le k-1,\] which contradicts the $k$-connectivity of $G$.
Therefore $C_2 = \emptyset$ and so 
$V(C) = C_1$.
Since $G[V(C)]$ is triangle-free, there is a vertex $w \in V(C)$ such that $d_{G[V(C)]}(w) \le \frac{|V(C)|}{2}$ by Fact~\ref{fact:trifree}~(b).
Moreover, since $C$ is a connected component of $G-(A \cup R)$, we have $N_G(w)\subseteq N_{G[C]}(w)\cup (A\cup R)$.
Therefore
\[2p \le \delta(G) \le d_G(w) \le d_{G[C]}(w)+|A \cup R|\le \frac{|V(C)|}{2}+p \]
and so $|V(C)| \ge 2p$, which contradicts the fact that $V(C)=C_1 \subseteq S_1$ and $|S_1\cup S_2|\le 2p-1$.
Hence we have shown that $G-R$ is $k$-connected for any vertex set $R \subseteq V(F)\setminus V(M)$ of size $p-k+1$.
\end{proof}

Now we are ready to finish the proof.

\begin{proof}[Proof of Theorems~\ref{thm:main_trianglefree} and \ref{thm:main_bip_g5}] 
Let $T$ be a tree of order $m$ with bipartition $(X,Y)$, and $G$
be a  graph with $\delta(G)\ge 2k+2m+\beta-3$ where 
\[\beta=\begin{cases}
    m-1 & \text{if $G$ is a triangle-free graph,} \\
    \max\{|X|,|Y|\} & \text{if $G$ is a bipartite graph,}\\
    \max\{\frac{m-1}{2}, \Delta(T)\} & \text{if $g(G) \ge 5$.}
\end{cases}\]
If $m=1$, then Theorems~\ref{thm:main_trianglefree} and \ref{thm:main_bip_g5} are true by Theorem~\ref{thm:Chartrand}.
We assume $m \ge 2$. 
Let $p = k+m-1$. Then $\delta(G) \ge 2p+\beta-1\ge 2p$.
By Lemma~\ref{lem:contriexists}, there exists a $p$-connected triple $(S_1, S_2, F)$ of $G$.
We assume that $|V(F)|$ is as small as possible.
Then, by Lemma~\ref{lem:matching}, there is a matching $M$ between $S_1$ and $V(F)$ saturating $S_1$.
Let $F_M = V(M) \cap V(F)$.
Then $|F_M|=|S_1|$ and $F_M \subseteq N_G(S_1) \cap V(F)$.
Since $|V(F)|> |S_1|$ by Lemma~\ref{lem:matching}, $|V(F) \setminus F_M|=|V(F)|-|F_M|=|V(F)|-|S_1|>0$
and so
$V(F) \setminus F_M \neq \emptyset$.
Take a vertex $v \in V(F)\setminus F_M$.
Then, since $G$ has no triangles and $M$ is a matching between $S_1$ and $F_M$, $|N_G(v) \cap (S_1 \cup F_M)| \le |S_1|$.
Thus
\begin{align*}
|N_{G[V(F)\setminus F_M]}(v)| &=
|N_G(v) \setminus (S_1 \cup S_2 \cup F_M)|  \\ &\ge \delta(G) - |N_G(v) \cap (S_1 \cup F_M)|  - |N_G(v) \cap S_2| \\
&\ge \delta(G) - (|S_1|+|S_2|)\\
& \ge (2p+\beta-1)-(2p-1)\\
&= \beta.
\end{align*}
In all three cases, the corresponding previous results (Lemma~\ref{lem:tree_embedding}, Corollary~\ref{cor:tree_embedding_bipartite}, or Theorem~\ref{thm:g5}) yield a subtree $T' \cong T$ in $G[V(F)\setminus F_M]$. 
Consequently, Lemma~\ref{lem:k-connected} implies that $G-V(T')$ is $k$-connected.
\end{proof}

\section{Concluding Remarks}

 Yang and Tian~\cite{YT25} proved that Conjecture~\ref{conj:bip} holds for odd paths, and Fujita~\cite{Fujita25} generalized this result by relaxing the bipartite condition to the triangle-free setting. Thus, we are led to ask whether Conjecture~\ref{conj:bip} can be extended to triangle-free graphs.

 \begin{question}\label{Q:tri}
 Let $k$ be a positive integer.
 For any tree $T$ with bipartition $(X,Y)$, does every $k$-connected triangle-free graph $G$ with minimum degree at least $k+t$, where $t = \max\{|X|,|Y|\}$, contain a subtree $T' \cong T$ such that $G- V(T')$ is still $k$-connected?
 \end{question}

Furthermore, the Erd\H{o}s-S\'{o}s conjecture discusses results concerning tree embeddings, and related results suggest that our result might be further improved. For example, Jiang~\cite{Jiang01} showed that, for a tree $T$ of order $m$, a graph $G$ satisfying $g(G)\ge 2t+1$ and $\delta (G) \ge \max \{\frac{m-1}{t}, \Delta(T)\}$ has a subtree $T' \cong T$. From this, one can observe that the following is a direct consequence of our proof.

\begin{theorem}
Let $G$ be a $k$-connected graph with $g(G)\ge 2t+1$ and $T$ be a tree of order $m$. 
If $\delta (G) \ge 2k+2m+ \max \{\frac{m-1}{t}, \Delta(T)\}-3$, then $G$ contains a subtree $T' \cong T$ such that $\kappa(G-V(T'))\ge k$. 
\end{theorem}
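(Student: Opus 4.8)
The plan is to run the argument of Theorems~\ref{thm:main_trianglefree} and \ref{thm:main_bip_g5} essentially verbatim, changing only the tree-embedding ingredient: where that proof appeals to Lemma~\ref{lem:tree_embedding} (or Corollary~\ref{cor:tree_embedding_bipartite}, or Theorem~\ref{thm:g5}), here it appeals to the result of Jiang~\cite{Jiang01}. Write $\beta=\max\{\frac{m-1}{t},\Delta(T)\}$, so the hypothesis is $\delta(G)\ge 2k+2m+\beta-3$. First I would make two harmless reductions. We may assume $t\ge 2$: then $g(G)\ge 2t+1\ge 5>3$, so $G$ is triangle-free, which is exactly what makes the triangle-free Lemmas~\ref{lem:matching} and \ref{lem:k-connected} available. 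And we may assume $m\ge 2$, since for $m=1$ we have $\beta=\Delta(T)=0$, hence $\delta(G)\ge 2k-1\ge\lfloor\frac{3k}{2}\rfloor$, and Theorem~\ref{thm:Chartrand} finishes the job. For $m\ge 2$ note $\beta\ge\Delta(T)\ge 1$.

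Next, set $p:=k+m-1$, so that $\delta(G)\ge 2p+\beta-1\ge 2p$ and $p\ge k$; all of Lemmas~\ref{lem:contriexists}, \ref{lem:matching}, and \ref{lem:k-connected} now apply. By Lemma~\ref{lem:contriexists} there is a $p$-connected triple $(S_1,S_2,F)$ of $G$, which I would choose with $|V(F)|$ as small as possible. Lemma~\ref{lem:matching} then supplies a matching $M$ between $S_1$ and $V(F)$ saturating $S_1$, together with $|V(F)|>|S_1|$; put $F_M=V(M)\cap V(F)$, so $|F_M|=|S_1|$, $F_M\subseteq N_G(S_1)\cap V(F)$, and $V(F)\setminus F_M\neq\emptyset$. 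The one local estimate to record is that for every $v\in V(F)\setminus F_M$ one has $|N_G(v)\cap(S_1\cup F_M)|\le|S_1|$: the $|S_1|$ edges of $M$ partition $S_1\cup F_M$ into pairs, and triangle-freeness prevents $v$ from being adjacent to both ends of any such pair. Exactly as in the proof of Theorem~\ref{thm:main_trianglefree}, this gives
\[
\delta\big(G[V(F)\setminus F_M]\big)\ \ge\ \delta(G)-|N_G(v)\cap(S_1\cup F_M)|-|N_G(v)\cap S_2|\ \ge\ (2p+\beta-1)-(2p-1)\ =\ \beta.
\]

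Finally I would apply Jiang's theorem inside $H:=G[V(F)\setminus F_M]$. Since $H$ is an induced subgraph of $G$, $g(H)\ge g(G)\ge 2t+1$; moreover $\delta(H)\ge\beta\ge\frac{m-1}{t}$ and $\Delta(H)\ge\delta(H)\ge\beta\ge\Delta(T)$, so Jiang's criterion is satisfied and $H$ contains a subtree $T'\cong T$. Then $V(T')\subseteq V(F)\setminus F_M=V(F)\setminus V(M)$ and $|V(T')|=m=p-k+1$, so Lemma~\ref{lem:k-connected} applied with $R=V(T')$ yields that $G-V(T')$ is $k$-connected, as required.

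I do not expect a real obstacle here: the proof is a direct transcription of the one for Theorem~\ref{thm:main_trianglefree}, with the sole substitution of the embedding input. The only points that genuinely need checking are (i) that the girth hypothesis forces triangle-freeness (for $t\ge 2$), so that Lemmas~\ref{lem:matching} and \ref{lem:k-connected} are legitimately invoked, and (ii) the bookkeeping $p=k+m-1$, which simultaneously makes the residual neighbourhood inside $V(F)\setminus F_M$ have minimum degree at least $\beta$ and makes the deleted set $V(T')$ have exactly the size $p-k+1$ demanded by Lemma~\ref{lem:k-connected}. Both are routine.
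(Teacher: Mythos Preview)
Your proposal is correct and is exactly what the paper intends: it states the theorem as ``a direct consequence of our proof'' with Jiang's embedding result substituted for the other tree-embedding lemmas, and your write-up carries this out line by line with the same choice $p=k+m-1$ and the same minimum-degree count inside $G[V(F)\setminus F_M]$. The only cosmetic point is that your ``reduction'' to $t\ge 2$ is really just recognising an implicit hypothesis (Jiang's theorem is stated for $t\ge 2$, and the paper's framework needs triangle-freeness), not a case you separately dispose of.
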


\section*{Acknowledgement}
Hojin Chu was supported by a KIAS Individual Grant (CG101801) at Korea Institute for Advanced Study.
Shinya Fujita was supported by JSPS KAKENHI (23K03202).
Boram Park was supported by the National Research Foundation of Korea grant (No. RS-2025-00523206) funded by the Korea government (MIST) and the New Faculty Startup Fund from Seoul National University.

\end{document}